\theoremstyle{plain}
\newtheorem{theorem}{Theorem}[section]
\newtheorem{lemma}[theorem]{Lemma}
\newtheorem{proposition}[theorem]{Proposition}
\newtheorem{corollary}[theorem]{Corollary}
\theoremstyle{definition}
\newtheorem{definition}[theorem]{Definition}
\newtheorem{example}[theorem]{Example}
\theoremstyle{remark}
\newcommand{\groebner}{Gr\"{o}bner }
\newcommand{\intnonneg}{\mathbb{Z}_{\geq 0}}
\newcommand{\calA}{\mathcal{A}}
\newcommand{\calC}{\mathcal{C}}
\newcommand{\calF}{\mathcal{F}}
\newcommand{\calK}{\mathcal{K}}
\newcommand{\calM}{\mathcal{M}}
\newcommand{\ZZ}{\mathbb{Z}}
\newcommand{\kk}{\Bbbk}
\newcommand{\scrM}{\mathscr{M}}
\newcommand{\ab}{\mathbf{a}}
\newcommand{\eb}{\mathbf{e}}
\newcommand{\tb}{\mathbf{t}}
\newcommand{\fkp}{\mathfrak{p}}
\newcommand{\fkS}{\mathfrak{S}}
\DeclareMathOperator{\codim}{codim}
\DeclareMathOperator{\length}{length}
\DeclareMathOperator{\GL}{GL}
\DeclareMathOperator{\borel}{B}
\DeclareMathOperator{\ini}{in}
\DeclareMathOperator{\ind}{ind}
\DeclareMathOperator{\gin}{gin}
\DeclareMathOperator{\HS}{HS}
\DeclareMathOperator{\HF}{HF}
\title{Standard multigraded Hibi rings and Cartwright-Sturmfels ideals}
\date{}
\author[K. Matsushita]{Koji Matsushita}
\address{Graduate School of Mathematical Sciences, University of Tokyo, Komaba, Meguro-ku, Tokyo 153-8914, Japan}
\email{koji-matsushita@g.ecc.u-tokyo.ac.jp}
\author[K. Tani]{Koichiro Tani}
\address{Department of Pure and Applied Mathematics, Graduate School of Information
Science and Technology, Osaka University, Osaka, Japan}
\email{tani-k@ist.osaka-u.ac.jp}
\subjclass{Primary: 13F65, Secondary: 13P10, 13D40, 05E40, 14M25}
\keywords{Hibi ring, Multigraded Hilbert series, Multidegree polynomial, Cartwright-Sturmfels ideal, Multigraded generic initial ideal}
\begin{document}
\begin{abstract}
    In this paper, we introduce standard multigradings on Hibi rings, which are algebras arising from posets. 
    We show that any standard multigrading on a Hibi ring that makes its defining ideal (called the Hibi ideal) homogeneous is induced by a chain of the underlying poset.
    After that, we calculate the multigraded Hilbert series of Hibi rings by generalizing the theory of $P$-partition and we compute the multidegree polynomials of Hibi rings.
    Furthermore, we characterize Hibi ideals that are Cartwright-Sturmfels ideals.
\end{abstract}
\maketitle

\section{Introduction}\label{section:Introduction}
We assume that all posets and lattices appearing in this paper are finite.
Let $L$ be a lattice and let $\kk$ be a field.
Let $S_{L} = \kk[x_{\alpha} : \alpha \in L]$ be the polynomial ring over $\kk$ and $I_{L}$ be the ideal of $S_{L}$ generated by
\begin{equation*}
  \calF_{L} \coloneqq \{f_{\alpha, \beta} : \alpha, \beta \in L \text{ are incomparable.}\},
\end{equation*}
where $f_{\alpha, \beta} \coloneqq x_{\alpha}x_{\beta}-x_{\alpha \wedge \beta}x_{\alpha \vee \beta}$
for incomparable elements $\alpha, \beta \in L$.
A \textit{compatible monomial order} is a monomial order $\preceq$ on $S_{L}$ such that
$\ini_{\preceq}(f_{\alpha, \beta}) = x_{\alpha}x_{\beta}$
for all incomparable elements $\alpha, \beta \in L$.
It is known that the following are equivalent (\cite{HibiRing}, see also \cite[Chapter~6]{HHOBinomialIdeals}):
\begin{enumerate}
  \item $I_{L}$ is a prime ideal;
  \item $L$ is a distributive lattice;
  \item $\calF_{L}$ is a \groebner basis of $I_{L}$ with respect to a compatible monomial order.
\end{enumerate}
When the equivalent conditions hold, the quotient ring $S_{L}/I_{L}$ is called the \textit{Hibi ring} of $L$, introduced in \cite{HibiRing}.
In this paper, we call $I_{L}$ the \textit{Hibi ideal} of $L$ if the equivalent condition holds.
For more details on Hibi rings, see~\cite[Chapter 6]{HHOBinomialIdeals}.

To study Hibi rings, we use a one-to-one correspondence between posets and distributive lattices.
Let $P$ be a poset equipped with a partial order $\leq_P$. A \textit{poset ideal} of $P$ is a subset $\alpha$ of $P$ satisfying that, whenever $a \in \alpha$ and $b \in P$ with $b \leq_P a$, one has $b \in \alpha$.
Let $L(P)$ be the set of poset ideals of $P$.
Since the empty set and $P$ itself are elements of $L(P)$, and since $\alpha \cap \beta$ and $\alpha \cup \beta$ belong to $L(P)$ for all
$\alpha,\beta \in L(P)$, the set $L(P)$ forms a distributive lattice ordered by inclusion, 
where the join and meet operations correspond to the union and intersection, respectively.
Conversely, the poset $P$ can be recovered from $L(P)$ as follows:
We say that an element $\alpha \in L(P)\setminus \{\emptyset\}$ is \textit{join-irreducible} if, whenever $\alpha=\beta\cup\gamma$ with $\beta,\gamma\in L(P)$, one has either $\alpha=\beta$ or $\alpha=\gamma$.
Then the subposet of $L(P)$ consisting of all join-irreducible elements coincides with $P$.
This is well known as Birkhoff's representation theorem \cite{Birkhoff1967}.
We denote the Hibi ring arising from $P$ by $\kk[P]$, that is, $\kk[P]:=S_{L(P)}/I_{L(P)}$.

Usually, the Hibi ring $S_L/I_L$ of a distributive lattice $L$ is regarded as a standard $\ZZ$-graded ring by setting $\deg(x_\alpha)=1$.
In this paper, we introduce a standard multigrading on the Hibi ring and study its properties.

\medskip

First, we discuss how to define standard multigradings on Hibi rings, i.e., how to define a multigrading on $S_{L(P)}$ for a poset $P$ such that $I_{L(P)}$ is homogeneous.
In this paper, we use the term \textit{homogeneous} in the multigraded sense.
We give a multigrading on $S_{L(P)}$ induced by a chain of $P$
(Proposition~\ref{prop:multigrading is compatible}) and show that any multigrading that makes $I_{L(P)}$ a homogeneous ideal coincides with ours (Theorem~\ref{theorem:multigraded with a chain is canonical}).


After we introduce multigradings, we study three topics about multigraded Hibi rings; multigraded Hilbert series, multidegree polynomials and multigraded generic initial ideals.
We determine the multigraded Hilbert series of multigraded Hibi rings.
In the case where $S_{L(P)}$ is considered as the standard $\ZZ$-graded ring, 
the Hilbert series of $\kk[P]$ is given in \cite[Proposition~2.3]{Hibi1991Hilbert}, where it is computed using the theory of $P$-partition~\cite{StanleyP-partitions}.
We extend this method to the multigrading case (Theorem~\ref{theorem:Hilbert series of multigraded Hibi rings} and Corollary~\ref{corollary:Hilbert series}).

Moreover, using these results, we calculate the multidegree polynomials~\cite[Section 8.5]{MillerSturmfels} of multigraded Hibi rings (Proposition~\ref{proposition:multidegree of Hibi rings}).
The notion of multidegree is a generalization of the notion of degree of graded rings and modules to a multigraded settings. One can compute the multidegree polynomial from the $\calK$-polynomial.
Multidegree polynomials have applications, for example, in algebraic geometry and convex geometry (see, e.g.,~\cite{CaminataCidRuizConca},~\cite{TrungVermaMixedVolume}).


Finally, we discuss Cartwright-Sturmfels Hibi ideals.
We say that a homogeneous ideal is \textit{Cartwright-Sturmfels} if there exists a squarefree
multigraded Borel-fixed monomial ideal such that its multigraded Hilbert series is
the same as that of the homogeneous ideal. It was shown in~\cite[Proposition 2.6]{ConcaDeNegriGorlaUGBandCS}
that, if the ground field is infinite, a homogeneous ideal is Cartwright-Sturmfels
if and only if a multigraded generic initial ideal of it is squarefree.
In many classes of ideals studied in combinatorial commutative algebra, the relationship
between these ideals and Cartwright-Sturmfels ideals was studied. Examples include
determinantal ideals~\cite{CSHilbertScheme, ConcaDeNegriGorlaUGBforMM},
Schubert determinantal ideals~\cite{ConcaDeNegriGorlaRadGin},
binomial edge ideals~\cite{ConcaDeNegriGorlaMulGinofDet,ConcaDeNegriGorlaCSassociGraph},
and so on.
In this paper, we determine when Hibi ideals are Cartwright-Sturmfels ideals (Theorem~\ref{theorem:characterization of CS Hibi ideal}).

\bigskip

This paper is organized as follows. In Section~\ref{section:Multigrading with a chain},
we introduce standard multigradings on Hibi ideals and study these multigradings.
In Section~\ref{section:Hilbert series of multigraded Hibi rings},
we review the theory of $P$-partition and refine it in order to compute the
multigraded Hilbert series of Hibi rings under our multigradings and compute them.
In Section~\ref{section:Multidegree polynomials of multigraded Hibi rings},
we compute multidegree polynomials of multigraded Hibi rings in terms of chains of distributive lattices.
In Section~\ref{section:When do Hibi ideals become Cartwright-Sturmfels},
we review the multigraded generic initial ideals and Cartwright-Sturmfels ideals,
and we characterize Hibi ideals that are Cartwright-Sturmfels ideals
under our multigradings.

\subsection*{Acknowledgements}
The authors would like to thank Akihiro Higashitani for giving helpful discussions.
The first author was partially supported by Grant-in-Aid for JSPS Fellows Grant JP25KJ0047.

\section{Multigrading with a chain}\label{section:Multigrading with a chain}
In this section, we introduce and study multigradings on Hibi rings.
For integers $a$ and $b$ with $a\le b$, let $[a,b]:=\{c\in \ZZ : a\le c \le b\}$.
Throughout this section, let $P:=[1,n]$ be a poset equipped with $\leq_P$.

First, for a nonnegative integer $m$, we define a standard $\ZZ^{m+1}$-multigrading on $S_{L(P)}$, that is, each variable of $S_{L(P)}$ has degree $\eb_i$ for some $i\in [0,m]$ where $\eb_i$ stands for the $i$th unit vector of $\ZZ^{m+1}$.
Let $C=\{c_1<_P\cdots<_Pc_\ell\}$ be a chain of $P$ and $C_i:=\{c_1,\ldots,c_i\}$ for $i=0,1,\ldots,\ell$ where we let $C_0:=\emptyset$.
Moreover, let $\fkp(C):=\{C_0,C_1,\ldots,C_\ell\}$ and let $f_C : \fkp(C) \to [0,m]$ be a map.
Note that for any poset ideal $\alpha$ of $P$, the set $\alpha\cap C$ belongs to $\fkp(C)$.
We define the standard $\ZZ^{m+1}$-multigrading on $S_{L(P)}$ by setting $\deg_{f_C}(x_\alpha):=\eb_{f_C(\alpha\cap C)}$.
In particular, when $m=\ell$ and $f_C(x_\alpha)=|\alpha\cap C|$ for any $\alpha\in L(P)$, we simply write $\deg_C$ instead of $\deg_{f_C}$.


\begin{proposition}\label{prop:multigrading is compatible}
  Work with the same notation as above.
    Then $I_{L(P)}$ is homogeneous.
\end{proposition}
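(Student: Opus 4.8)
The plan is to verify that each binomial generator $f_{\alpha,\beta}\in\calF_{L(P)}$ is homogeneous for the multigrading $\deg_{f_C}$; since an ideal generated by homogeneous elements is homogeneous, this suffices. As $f_{\alpha,\beta}=x_\alpha x_\beta - x_{\alpha\cap\beta}x_{\alpha\cup\beta}$ is a difference of two monomials (recall that in the distributive lattice $L(P)$ meet and join are $\cap$ and $\cup$), I only need to check that these two monomials carry the same multidegree, that is,
\[
  \eb_{f_C(\alpha\cap C)} + \eb_{f_C(\beta\cap C)} = \eb_{f_C((\alpha\cap\beta)\cap C)} + \eb_{f_C((\alpha\cup\beta)\cap C)}.
\]

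The key observation is that intersecting with the chain $C$ is compatible with meet and join: for poset ideals $\alpha,\beta$ one has $(\alpha\cap\beta)\cap C = (\alpha\cap C)\cap(\beta\cap C)$ and $(\alpha\cup\beta)\cap C = (\alpha\cap C)\cup(\beta\cap C)$. By the note preceding the statement, both $\alpha\cap C$ and $\beta\cap C$ lie in $\fkp(C)$, so they are initial segments of the chain, say $\alpha\cap C = C_i$ and $\beta\cap C = C_j$; after relabeling we may assume $i\le j$. Since initial segments of a chain are totally ordered by inclusion, $C_i\cap C_j = C_i$ and $C_i\cup C_j = C_j$. Hence the unordered pair $\{(\alpha\cap\beta)\cap C,\ (\alpha\cup\beta)\cap C\} = \{C_i, C_j\}$ coincides with the unordered pair $\{\alpha\cap C,\ \beta\cap C\}$.

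Applying $f_C$ and summing the corresponding unit vectors of $\ZZ^{m+1}$, the right-hand side of the displayed equation becomes $\eb_{f_C(C_i)} + \eb_{f_C(C_j)}$, which equals the left-hand side. Thus $f_{\alpha,\beta}$ is homogeneous for every incomparable pair $\alpha,\beta$, and therefore $I_{L(P)}$ is homogeneous.

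I expect no serious obstacle here: the entire argument rests on the single point that intersecting a poset ideal with $C$ produces an initial segment of $C$, which collapses the incomparability of $\alpha,\beta$ into the total order on $\fkp(C)$. Once the meet and join are seen merely to swap the two values $C_i$ and $C_j$, the equality of multidegrees is immediate, and it holds for \emph{any} choice of the map $f_C$, which is exactly the flexibility the later results will exploit.
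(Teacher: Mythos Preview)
Your proof is correct and follows essentially the same approach as the paper: reduce to showing each generator $f_{\alpha,\beta}$ is homogeneous, write $\alpha\cap C=C_i$ and $\beta\cap C=C_j$, and observe that $(\alpha\cap\beta)\cap C=C_i$ and $(\alpha\cup\beta)\cap C=C_j$ so the two monomials have the same multidegree $\eb_{f_C(C_i)}+\eb_{f_C(C_j)}$.
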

\begin{proof}
  It is enough to show that for $\alpha,\beta\in L(P)$, the binomial $f_{\alpha,\beta}$ is a homogeneous element of $S_{L(P)}$.
    Since $\alpha$ and $\beta$ are poset ideals of $P$, we can write $\alpha\cap C=C_i$ and $\beta\cap C=C_j$ for some $i$ and $j$.
    We may assume that $i\le j$.
    Then we can see that $(\alpha\cup\beta)\cap C=C_j$ and $(\alpha\cap\beta)\cap C=C_i$.
    Therefore, we have 
    \[
    \deg_{f_C}(x_\alpha x_\beta)=\deg_{f_C}(x_{\alpha\cup\beta}x_{\alpha\cap\beta})=\eb_{f_C(C_i)}+\eb_{f_C(C_j)},
    \]
    and hence $f_{\alpha,\beta}$ is a homogeneous element.
\end{proof}

\begin{example}\label{example:multigradings of N-poset}
  Let $P$ be a poset depicted in Figure~\ref{fig:N-poset}.
  Then, the distributive lattice $L(P)$ associated with $P$ is depicted as in Figure~\ref{fig:distributive lattice of N-poset}.
  
  If $S_{L(P)}$ is multigraded by the chain $C_1 := \{2,3\}$, then
  \[
  \deg_{C_1}(x_{\alpha}) =
  \left\{
  \begin{aligned}
      &\eb_0 \quad (\alpha = \emptyset, \{1\}), \\
      &\eb_1 \quad (\alpha = \{2\}, \{1,2\}, \{2,4\}, \{1,2,4\}), \\
      &\eb_2 \quad (\alpha = \{1,2,3\}, \{1,2,3,4\}). \\
  \end{aligned}
  \right.
  \]
  Therefore, the elements of $L(P)$ are grouped as in Figure~\ref{fig:multigraded with 2,3}.

  If $S_{L(P)}$ is multigraded by the chain $C_2 := \{2,4\}$, then
  \[
  \deg_{C_2}(x_{\alpha}) =
  \left\{
  \begin{aligned}
      &\eb_0 \quad (\alpha = \emptyset, \{1\}), \\
      &\eb_1 \quad (\alpha = \{2\}, \{1,2\}, \{1,2,3\}), \\
      &\eb_2 \quad (\alpha = \{2,4\}, \{1,2,4\}, \{1,2,3,4\}). \\
  \end{aligned}
  \right.
  \]
  Therefore, the elements of $L(P)$ are grouped as in Figure~\ref{fig:multigraded with 2,4}.

  \begin{figure}[ht]
    \begin{minipage}{0.48\columnwidth}
      \centering
      {\scalebox{0.5}{
        \begin{tikzpicture}[line width=0.05cm]
          \coordinate (N1) at (0,0); 
          \coordinate (N2) at (2,0); 
          \coordinate (N3) at (0,2); 
          \coordinate (N4) at (2,2);
          \draw  (N1)--(N3);
          \draw  (N2)--(N3);
          \draw  (N2)--(N4);
          \draw [line width=0.05cm, fill=white] (N1) circle [radius=0.15] node [left=2pt] {\Huge $1$};
          \draw [line width=0.05cm, fill=white] (N2) circle [radius=0.15] node [right=2pt] {\Huge $2$};
          \draw [line width=0.05cm, fill=white] (N3) circle [radius=0.15] node [left=2pt] {\Huge $3$};
          \draw [line width=0.05cm, fill=white] (N4) circle [radius=0.15] node [right=2pt] {\Huge $4$};
        \end{tikzpicture}
      }}
      \caption{The poset $P$}
      \label{fig:N-poset}
    \end{minipage}
    \begin{minipage}{0.48\columnwidth}
      \centering
      {\scalebox{0.4}{
        \begin{tikzpicture}[line width=0.05cm]
          \coordinate (a1) at (0, 0);
          \coordinate (a2) at (-2, 2);
          \coordinate (a3) at (2, 2);
          \coordinate (a5) at (0, 4);
          \coordinate (a6) at (4, 4);
          \coordinate (a7) at (-2, 6);
          \coordinate (a8) at (2, 6);
          \coordinate (a9) at (0, 8);
          
          \draw (a1) -- (a2);
          \draw (a1) -- (a3);
          \draw (a2) -- (a5);
          \draw (a3) -- (a5);
          \draw (a3) -- (a6);
          \draw (a5) -- (a7);
          \draw (a5) -- (a8);
          \draw (a6) -- (a8);
          \draw (a7) -- (a9);
          \draw (a8) -- (a9);
      
          \draw [line width=0.05cm, fill=white] (a1) circle [radius=0.15] node [below=2pt] {\Huge $\emptyset$};
          \draw [line width=0.05cm, fill=white] (a2) circle [radius=0.15] node [below left=1pt] {\Huge $\{1\}$};
          \draw [line width=0.05cm, fill=white] (a3) circle [radius=0.15] node [below right=1pt] {\Huge $\{2\}$};
          \draw [line width=0.05cm, fill=white] (a5) circle [radius=0.15] node [left=8pt] {\Huge $\{1, 2\}$};
          \draw [line width=0.05cm, fill=white] (a6) circle [radius=0.15] node [right=2pt] {\Huge $\{2, 4\}$};
          \draw [line width=0.05cm, fill=white] (a7) circle [radius=0.15] node [above left=1pt] {\Huge $\{1, 2, 3\}$};
          \draw [line width=0.05cm, fill=white] (a8) circle [radius=0.15] node [above right=2pt] {\Huge $\{1, 2, 4\}$};
          \draw [line width=0.05cm, fill=white] (a9) circle [radius=0.15] node [above=2pt] {\Huge $\{1, 2, 3, 4\}$};
      \end{tikzpicture}
        }}
        \caption{The distributive lattice $L(P)$}
        \label{fig:distributive lattice of N-poset}
    \end{minipage}
  \end{figure}
  \begin{figure}[ht]
    \begin{minipage}{0.48\columnwidth}
      \centering
      {\scalebox{0.4}{
        \begin{tikzpicture}[line width=0.05cm]
          \coordinate (a1) at (0, 0);
          \coordinate (a2) at (-2, 2);
          \coordinate (a3) at (2, 2);
          \coordinate (a5) at (0, 4);
          \coordinate (a6) at (4, 4);
          \coordinate (a7) at (-2, 6);
          \coordinate (a8) at (2, 6);
          \coordinate (a9) at (0, 8);
          
          \draw (a1) -- (a2);
          \draw (a1) -- (a3);
          \draw (a2) -- (a5);
          \draw (a3) -- (a5);
          \draw (a3) -- (a6);
          \draw (a5) -- (a7);
          \draw (a5) -- (a8);
          \draw (a6) -- (a8);
          \draw (a7) -- (a9);
          \draw (a8) -- (a9);
      
          \draw [line width=0.05cm, fill=white] (a1) circle [radius=0.15];
          \draw [line width=0.05cm, fill=white] (a2) circle [radius=0.15];
          \draw [line width=0.05cm, fill=white] (a3) circle [radius=0.15];
          \draw [line width=0.05cm, fill=white] (a5) circle [radius=0.15];
          \draw [line width=0.05cm, fill=white] (a6) circle [radius=0.15];
          \draw [line width=0.05cm, fill=white] (a7) circle [radius=0.15];
          \draw [line width=0.05cm, fill=white] (a8) circle [radius=0.15];
          \draw [line width=0.05cm, fill=white] (a9) circle [radius=0.15];

          \draw [very thick, rotate around={45:(-1,1)}] (-1,1) ellipse (1 and 2);
          \draw [very thick, rotate around={0:(2,4)}] (2,4) ellipse (2.5 and 2.5);
          \draw [very thick, rotate around={-45:(-1,7)}] (-1,7) ellipse (1 and 2);
          
          \node at (-2,0) [font=\Huge] {$\eb_0$};
          \node at (5,4) [font=\Huge] {$\eb_1$};
          \node at (-2,8) [font=\Huge] {$\eb_2$};
      \end{tikzpicture}
      }}
      \caption{Multigrading by $C_1$}
      \label{fig:multigraded with 2,3}
    \end{minipage}
    \begin{minipage}{0.48\columnwidth}
      \centering
      {\scalebox{0.4}{
        \begin{tikzpicture}[line width=0.05cm]
          \coordinate (a1) at (0, 0);
          \coordinate (a2) at (-2, 2);
          \coordinate (a3) at (2, 2);
          \coordinate (a5) at (0, 4);
          \coordinate (a6) at (4, 4);
          \coordinate (a7) at (-2, 6);
          \coordinate (a8) at (2, 6);
          \coordinate (a9) at (0, 8);
          
          \draw (a1) -- (a2);
          \draw (a1) -- (a3);
          \draw (a2) -- (a5);
          \draw (a3) -- (a5);
          \draw (a3) -- (a6);
          \draw (a5) -- (a7);
          \draw (a5) -- (a8);
          \draw (a6) -- (a8);
          \draw (a7) -- (a9);
          \draw (a8) -- (a9);
      
          \draw [line width=0.05cm, fill=white] (a1) circle [radius=0.15];
          \draw [line width=0.05cm, fill=white] (a2) circle [radius=0.15];
          \draw [line width=0.05cm, fill=white] (a3) circle [radius=0.15];
          \draw [line width=0.05cm, fill=white] (a5) circle [radius=0.15];
          \draw [line width=0.05cm, fill=white] (a6) circle [radius=0.15];
          \draw [line width=0.05cm, fill=white] (a7) circle [radius=0.15];
          \draw [line width=0.05cm, fill=white] (a8) circle [radius=0.15];
          \draw [line width=0.05cm, fill=white] (a9) circle [radius=0.15];

          \draw [very thick, rotate around={45:(-1,1)}] (-1,1) ellipse (1 and 2);
          \draw [very thick, rotate around={45:(0,5)}] (a5) ellipse (1 and 3.5);
          \draw [very thick, rotate around={45:(2,6)}] (a8) ellipse (1 and 3.5);

          \node at (1,-1) [font=\Huge] {$\eb_0$};
          \node at (3,1) [font=\Huge] {$\eb_1$};
          \node at (5,3) [font=\Huge] {$\eb_2$};
      \end{tikzpicture}
        }}
        \caption{Multigrading by $C_2$}
        \label{fig:multigraded with 2,4}
    \end{minipage}
  \end{figure}
\end{example}

The following theorem says that our grading is essential for $I_{L(P)}$ to be homogeneous:

\begin{theorem}\label{theorem:multigraded with a chain is canonical}
 Suppose that $S_{L(P)}$ is standard $\ZZ^{m+1}$-multigraded with a degree map $\deg$ such that $I_{L(P)}$ is homogeneous.
Then there exist a chain $C$ of $P$ and a map $f_C : \fkp(C) \to [0,m]$ with $\deg=\deg_{f_C}$.
\end{theorem}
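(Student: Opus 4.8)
The plan is to encode the grading as a single combinatorial labelling and then reconstruct the chain from the places where the labelling jumps. Since the multigrading is standard, I would write $\deg(x_\alpha)=\eb_{d(\alpha)}$ for a unique function $d:L(P)\to[0,m]$. Because $\eb_i+\eb_j=\eb_k+\eb_\ell$ holds among unit vectors precisely when $\{i,j\}=\{k,\ell\}$ as multisets, homogeneity of the generator $f_{\alpha,\beta}$ is equivalent to the identity $\{d(\alpha),d(\beta)\}=\{d(\alpha\cap\beta),d(\alpha\cup\beta)\}$ (multiset equality) for all incomparable $\alpha,\beta\in L(P)$. I would use only the instances coming from \emph{covering diamonds}: for a poset ideal $\gamma$ and two distinct minimal elements $a,b$ of $P\setminus\gamma$, the pair $\gamma\cup\{a\},\gamma\cup\{b\}$ is incomparable with meet $\gamma$ and join $\gamma\cup\{a,b\}$, so writing $p=d(\gamma)$, $q=d(\gamma\cup\{a,b\})$, $r=d(\gamma\cup\{a\})$, $s=d(\gamma\cup\{b\})$ we get $\{r,s\}=\{p,q\}$. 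A short case check on this identity shows that in every such diamond at most one of the two edges $\gamma\to\gamma\cup\{a\}$ and $\gamma\to\gamma\cup\{b\}$ changes the value of $d$, and that the two parallel edges adding the same element either both leave $d$ unchanged or both effect the same transition $p\mapsto q$.

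Next I would establish a global consistency statement: for each $a\in P$, either adding $a$ never changes $d$ (call $a$ \emph{neutral}), or there is a single pair $(i_a,j_a)$ with $i_a\neq j_a$ such that every covering relation $\alpha\lessdot\alpha\cup\{a\}$ satisfies $d(\alpha)=i_a$ and $d(\alpha\cup\{a\})=j_a$ (call $a$ \emph{active}). The edges labelled $a$ are indexed by the poset ideals $\alpha$ with $\{x:x<_P a\}\subseteq\alpha$ and $\alpha\cap\{x:x\ge_P a\}=\emptyset$; these form the interval $[\,\{x<_P a\},\,P\setminus\{x\ge_P a\}\,]$ of $L(P)$, which is again a distributive lattice and hence has connected Hasse diagram. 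Each covering step in this interval adds some $b$ incomparable to $a$ and is exactly a diamond of the above type, in which the two $a$-edges are the edge before and the edge after the step. The case analysis shows this flip preserves the transition of the $a$-edge: if one is active with transition $(p,q)$ then so is the other, and if one is neutral so is the other. Propagating along the connected Hasse diagram gives the claim.

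Let $A\subseteq P$ be the set of active elements. I would show $A$ is a chain: if $a,b\in A$ were incomparable, then taking $\gamma=\{x:x<_P a \text{ or } x<_P b\}$ makes both $a$ and $b$ minimal in $P\setminus\gamma$, producing a diamond in which both labels are active, contradicting that at most one label changes $d$ in any diamond. Write $A=\{c_1<_P\cdots<_P c_\ell\}$ and set $C:=A$.

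Finally I would assemble the grading. For any $\alpha\in L(P)$, choose a linear extension of $\alpha$ and read off $d$ along the saturated chain $\emptyset=\alpha_0\lessdot\cdots\lessdot\alpha_k=\alpha$: the value of $d$ is unchanged across neutral steps and jumps only at active steps. Since $C$ is a chain and $\alpha$ is a poset ideal, the members of $C$ lying in $\alpha$ form the initial segment $\alpha\cap C=C_r$ with $r=|\alpha\cap C|$, and they occur along the chain in the order $c_1,\ldots,c_r$. Global consistency forces $d(\emptyset)=i_{c_1}$ and $j_{c_k}=i_{c_{k+1}}$, so the value after the last active step depends only on $r$; hence $d(\alpha)$ is a function of $\alpha\cap C$ alone. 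Defining $f_C(C_0):=d(\emptyset)$ and $f_C(C_r):=j_{c_r}$ for $r\ge 1$ then yields $\deg(x_\alpha)=\eb_{d(\alpha)}=\eb_{f_C(\alpha\cap C)}=\deg_{f_C}(x_\alpha)$, as required. I expect the main obstacle to be the global consistency lemma: the local diamond identity only controls parallel edges, so the real work is identifying the $a$-labelled edges with an interval of $L(P)$ and using its connectivity to upgrade the local relation to a single well-defined transition $(i_a,j_a)$.
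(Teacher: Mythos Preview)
Your argument is correct and takes a genuinely different route from the paper. The paper proceeds by a rank-based induction: starting from $\deg(x_\emptyset)=\eb_{i_0}$, it locates the smallest cardinality $j_1$ at which some $\alpha\in L_{j_1}(P)$ has $\deg(x_\alpha)\neq\eb_{i_0}$, shows directly from homogeneity of the relevant $f_{\alpha,\beta}$ that there is exactly one such $\alpha$ and that it is join-irreducible, and then restricts to the filter above it and repeats. This produces a nested sequence of join-irreducible ideals $\gamma_1\subset\cdots\subset\gamma_\ell$, hence the chain $C$. By contrast you work entirely with covering relations: the multiset identity on each covering diamond lets you attach to every element $a\in P$ a well-defined status (neutral or active with a fixed transition), the connectivity of the interval $[\{x<_P a\},\,P\setminus\{x\ge_P a\}]$ upgrading the local diamond relation to a global one. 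The chain $C$ then falls out as the set of active elements, and the value of $d$ on any $\alpha$ is read off from a saturated chain.

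The paper's approach is shorter and avoids your global consistency lemma, going straight from ``two ideals of the same rank cannot both change degree'' to join-irreducibility. Your approach is more structural: it explains transparently why the grading is governed by a chain (the active labels pairwise cannot sit in a common diamond), and the machinery of edge labellings and diamond relations would adapt readily to other lattices or to analysing more general degree maps. Your identification of the consistency lemma as the main step is accurate; the argument you give for it (interval plus connectivity plus the parallel-edge case of the diamond identity) is sound.
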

\begin{proof}
  For a nonnegative integer $k$, we set $L_k(P):=\{\alpha\in L(P) : |\alpha|=k\}$.
    Since $S_{L(P)}$ is standard multigraded, we have $\deg(x_\emptyset)=\eb_{i_0}$ for some $i_0\in [0,m]$.
    If $\deg(x_\alpha)=\eb_{i_0}$ for any $\alpha\in L(P)$, then by setting $C=\emptyset$ and defining $f_C:\fkp(C)\to [0,m]$ by $f(\emptyset)=i_0$, we have $\deg=\deg_{f_C}$.
    Otherwise, let $j_1$ be the smallest integer satisfying $M_{j_1}:=\{\alpha\in L_{j_1}(P) : \deg(x_\alpha)\neq \eb_{i_0}\}\neq \emptyset$.
    
    We show that $|M_{j_1}|=1$ and the unique element $\gamma_1$ in $M_{j_1}$ is join-irreducible.
    If $|M_{j_1}|\ge 2$, then for $\alpha,\beta\in M_{j_1}$ with $\alpha\neq\beta$, we have $\alpha\cap \beta\in L_k(P)$ for some $k<j_1$, and hence $\deg(x_{\alpha\cap \beta})=\eb_{i_0}$.
    Since $\deg(x_\alpha)\neq \eb_{i_0}$ and $\deg(x_\beta)\neq\eb_{i_0}$, the binomial $f_{\alpha,\beta}$ is not a homogeneous element, a contradiction because $I_{L(P)}$ is homogeneous.
    If $\gamma_1$ is not join-irreducible, then there exist $\alpha,\beta\in L(P)$ with $\gamma_1=\alpha\cup \beta$, $\alpha\neq \gamma_1$ and $\beta\neq \gamma_1$.
    In this case, we can see that $\alpha\in L_{k_1}(P)$ and $\beta\in L_{k_2}(P)$ for some $k_1,k_2<j_1$, so we have $\deg(x_\alpha)=\deg(x_\beta)=\eb_{i_0}$. 
    This implies that $f_{\alpha,\beta}$ is not a homogeneous element by the same argument as above, a contradiction. 

    Let $i_1$ be the integer satisfying $\deg(x_{\gamma_1})=\eb_{i_1}$ and consider the subposet $N_1:=\{\alpha \in L(P) : \gamma_1\subset \alpha\}$ of $L(P)$, which coincides with the distributive lattice $L(P\setminus \gamma_1)$.
    Then we have $\deg(x_\alpha)=\eb_{i_0}$ for any $\alpha \in L(P)\setminus N_1$.
    Indeed, $\alpha$ and $\gamma_1$ are incomparable in $L(P)$, and $\alpha\cap \gamma_1 \in L_k(P)$ for some $k<j_1$.
    Since $\deg(x_{\gamma_1})=\eb_{i_1}$ and $\deg(x_{\alpha\cap \gamma_1})=\eb_{i_0}$, the degree of $x_\alpha$ must be $\eb_{i_0}$.
    
    We define $j_2$ as the smallest integer satisfying $M_{j_2}:=\{\alpha\in L_{j_2}(P)\cap N_1 : \deg(x_\alpha)\neq \eb_{i_1}\}\neq \emptyset$ if such an integer exists.
    By a similar argument as above, we can see that $|M_{j_2}|=1$ and the unique element $\gamma_2$ in $M_{j_2}$ is a join-irreducible element in $N_1$.
    Actually, $\gamma_2$ is also join-irreducible in $L(P)$.
    Suppose that there exist $\alpha,\beta\in L(P)$ with $\gamma_2=\alpha\cup \beta$, $\alpha\neq \gamma_2$ and $\beta\neq \gamma_2$.
    Since $\gamma_1$ is join-irreducible and $\gamma_1\subset \gamma_2$, we have $\gamma_1\subset \alpha$ or $\gamma_1\subset \beta$.
    We may assume that $\gamma_1\subset \alpha$.
    Since $\gamma_2$ is join-irreducible in $N_1$, we obtain $\beta\in L(P)\setminus N_1$, and hence $\alpha\cap \beta\in L(P)\setminus N_1$.
    We have $\deg(x_\alpha)=\eb_{i_1}$ and $\deg(x_\beta)=\deg(x_{\alpha\cap\beta})=\eb_{i_0}$, so $\deg(x_{\alpha\cup \beta})=\eb_{i_1}$, a contradiction to $\alpha\cup\beta=\gamma_2\in M_{j_2}$.
    
    We continue to define $j_s$, $M_{j_s}$, $\gamma_{j_s}$, $N_s$, and $i_s$ for $s\ge 2$ by the same procedure.
    That is, let $j_s$ be the smallest integer satisfying $M_{j_s}:=\{\alpha\in L_{j_s}(P)\cap N_{s-1} : \deg(x_\alpha)\neq \eb_{i_{s-1}}\}\neq \emptyset$ if such an integer exists, let $\gamma_s$ be the unique element in $M_{j_s}$, let $N_s:=\{\alpha \in L(P) : \gamma_s\subset \alpha\}$ where we let $N_0:=L(P)$, and let $i_s$ be the integer satisfying $\deg(x_{\gamma_s})=\eb_{i_s}$.
    Since $P$ is finite, this procedure must eventually terminate.
    Let $\ell$ be the largest integer with $M_{j_\ell}\neq \emptyset$.
    By the same argument as above, we can see that $\gamma_s$ is join-irreducible in $L(P)$ and $\deg(x_\alpha)=\eb_{i_{s-1}}$ for any $s=1,\ldots,\ell+1$ and $\alpha\in N_{s-1}\setminus N_s$ where we let $N_{\ell+1}:=\emptyset$.

    Since $\gamma_s$ is join-irreducible and $\gamma_1\subset \cdots \subset \gamma_\ell$, the poset ideal $\gamma_s$ corresponds to an element $c_s$ in $P$ and $C:=\{c_1, \ldots,c_\ell\}$ forms a chain in $P$ with $c_1<_P\cdots<_Pc_\ell$.
    We define the map $f_C:\fkp(C)\to [0,m]$ by $f_C(C_s)=\eb_{i_s}$ for $s=0,1,\ldots,\ell$.
    Then we can see that $\deg_{f_C}(x_\alpha)=\eb_{f_C(C_{s-1})}=\eb_{i_{s-1}}$ for any $s=1,\ldots,\ell+1$ and $\alpha\in N_{s-1}\setminus N_s$.
    Therefore, we have $\deg=\deg_{f_C}$.
\end{proof}

\section{Hilbert series of multigraded Hibi rings}\label{section:Hilbert series of multigraded Hibi rings}
In this section, we compute the multigraded Hilbert series of Hibi rings.

Let $S$ be a standard $\ZZ^{m+1}$-graded polynomial ring.
For a homogeneous ideal $I$, the residue ring
$S/I = \bigoplus_{\ab \in \intnonneg^{m+1}} S_{\ab}/I_{\ab}$
can be considered as a $\ZZ^{m+1}$-graded ring.
The \textit{multigraded Hilbert function} and the \textit{multigraded Hilbert series} of $S/I$ are defined to be
\begin{align*}
  \HF(S/I; \ab) &= \dim_{\kk}(S/I)_{\ab}, \text{ and } \\
  \HS(S/I; \tb) &= \sum_{\ab \in \intnonneg^{m+1}}
  \dim_{\kk}(S/I)_{\ab} \tb^{\ab}, \text{ respectively.}
\end{align*}

To calculate the Hilbert series of Hibi rings, we use the theory of Stanley's $P$-partition (\cite{StanleyP-partitions}, see also \cite[Chapter~4.5]{StanleyCombinatorics}).
Let $P=[1,n]$ be a poset equipped with a partial order $\leq_P$.
We assume that $P$ is compatible with the usual ordering of $[1,n]$, that is, if $i<_Pj$, then $i<j$ in $\ZZ$.
Let $\fkS_n$ be the symmetric group of degree $n$ and let $e(P):=\{(p_1,\ldots,p_n)\in \fkS_n : i<j \text{ if }p_i <_P p_j\}$, which is called the \textit{Jordan-H{\" o}lder set} of $P$.
A map $\sigma : P\to \ZZ_{\ge 0}$ is \textit{order-reversing} if $i\le_P j$ implies $\sigma(i)\ge \sigma(j)$ in $\ZZ_{\ge 0}$.
Let $\calA(P)$ be the set of order-reversing maps.
We can construct an order-reversing map from a poset ideal of $P$; for $\alpha\in L(P)$, we define an order-reversing map $\sigma_{\alpha} : P\to \ZZ_{\ge 0}$ by setting $\sigma_\alpha(i)=1$ if $i\in \alpha$ and $\sigma_\alpha(i)=0$ otherwise.
Then there is the following bijection for any $z\in \ZZ_{\ge 0}$ (cf. \cite[Proposition~2.3]{Hibi1991Hilbert}):
\begin{equation}\label{equ:1to1order}
    \begin{split}
    \{x_{\alpha_1}\cdots x_{\alpha_z}\in \kk[P] :\alpha_1,\ldots,\alpha_z\in L(P)\} &\to \{\sigma\in \calA(P) : \sigma(i)\leq z \text{ for any }i\in P\}, \\  
    x_{\alpha_1}\cdots x_{\alpha_z} &\mapsto \sigma=\sigma_{\alpha_1}+\cdots+\sigma_{\alpha_z}.
    \end{split}
\end{equation}

Let $C:=\{c_1<_P\cdots<_Pc_\ell\}$ be a chain in $P$.
We first consider the Hilbert series of $\kk[P]$ multigraded by $C$.
If a monomial $x_{\alpha_1}\cdots x_{\alpha_z}$ ($\alpha_1,\ldots,\alpha_z\in L(P)$) has the degree $\ab=(a_0,a_1,\ldots,a_\ell)$, then we have $|\{\alpha_j : \alpha_j\cap C=C_i\}|=a_i$ for each $i=0,1,\ldots,\ell$,
equivalently, if we set $\sigma=\sigma_{\alpha_1}+\cdots+\sigma_{\alpha_z}$, then we obtain
\begin{equation*}
    \begin{split}
        a_\ell&=\sigma(c_\ell), \\
        a_{\ell-1}&=\sigma(c_{\ell-1})-\sigma(c_\ell), \\
        & \; \; \vdots \\
        a_1&=\sigma(c_1)-\sigma(c_2), \\
        a_0&= z-\sigma(c_1).
    \end{split}
\end{equation*}
Therefore, it follows from (\ref{equ:1to1order}) that for any $\ab\in \ZZ^{\ell+1}$, we have the following one-to-one correspondence:

\begin{equation}\label{equ:a1to1}
    \begin{split}
    \{x_{\alpha_1}\cdots x_{\alpha_{s(\ab)}}\in \kk[P] :\;  \alpha_1,
    \ldots,\alpha_{s(\ab)}&\in L(P), \;\deg_C(x_{\alpha_1}\cdots x_{\alpha_{s(\ab)}})=\ab\} \\
    &\updownarrow \\
    \{\sigma\in \calA(P) : \sigma(i)\leq s(\ab) \text{ for any }i\in P,\; &\sigma(c_j)=a_\ell+\cdots+a_{j} \text{ for any }j\in[1,\ell]\},
    \end{split}
\end{equation}
where $s(\ab):=a_0+\cdots+a_\ell$.

For $\pi=(p_1,\ldots,p_n)\in \fkS_n$, we say that a function $f:[1,n]\to \ZZ_{\ge 0}$ is \textit{$\pi$-compatible} if $f$ satisfies the following conditions:
\begin{itemize}
    \item[(i)] $f(p_1)\ge f(p_2)\ge \cdots \ge f(p_n)$ and
    \item[(ii)] $f(p_i)>f(p_{i+1})$ if $p_i>p_{i+1}$. 
\end{itemize}
For any function $f:[1,n]\to \ZZ_{\ge 0}$, there is a unique permutation $\pi\in \fkS_n$ such that $f$ is $\pi$-compatible (\cite[Lemma~4.5.1]{StanleyCombinatorics}).
Let $S_\pi$ be the set of all $\pi$-compatible functions.

\begin{lemma}[{\cite[Lemma~4.5.3]{StanleyCombinatorics}}]\label{lemma:p-partition}
One has $\calA(P)=\bigsqcup_{\pi\in e(P)}S_\pi$.
\end{lemma}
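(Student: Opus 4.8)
The plan is to read the displayed identity as two assertions: that the sets $S_\pi$ indexed by $\pi\in e(P)$ are pairwise disjoint, and that their union is exactly $\calA(P)$. The disjointness comes for free from the uniqueness statement quoted just above (\cite[Lemma~4.5.1]{StanleyCombinatorics}): every function $f:[1,n]\to\ZZ_{\ge 0}$ is $\pi$-compatible for exactly one $\pi\in\fkS_n$, so the full family $\{S_\pi\}_{\pi\in\fkS_n}$ already partitions the set of all such functions, and any subfamily---in particular the one indexed by $e(P)$---is automatically pairwise disjoint. Thus the entire content of the lemma is the set equality, and since each $f$ lies in a unique $S_\pi$, this reduces to a single equivalence: a function $f$ is order-reversing (that is, $f\in\calA(P)$) if and only if the unique permutation $\pi=(p_1,\ldots,p_n)$ for which $f$ is $\pi$-compatible belongs to $e(P)$.

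For the implication $\pi\in e(P)\Rightarrow f\in\calA(P)$ I would argue directly. Suppose $f$ is $\pi$-compatible with $\pi\in e(P)$, and take $i<_P j$ in $P$; writing $i=p_a$ and $j=p_b$, the defining condition of $e(P)$ forces $a<b$, so condition~(i) of $\pi$-compatibility yields $f(i)=f(p_a)\ge f(p_b)=f(j)$. Hence $f$ is order-reversing.

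The substantive direction is the converse, and this is where I expect the only real work to lie. Suppose $f\in\calA(P)$ and let $\pi=(p_1,\ldots,p_n)$ be its unique compatible permutation; the goal is to show $\pi\in e(P)$, i.e.\ that $p_a<_P p_b$ implies $a<b$. Arguing by contradiction, assume $p_a<_P p_b$ but $a>b$. Since $f$ is order-reversing we get $f(p_a)\ge f(p_b)$, while condition~(i) applied to the positions $b<a$ gives $f(p_b)\ge f(p_a)$; hence $f(p_a)=f(p_b)$. By (i) these equal endpoints force $f$ to be constant on all positions from $b$ to $a$, so they form a single block of equal $f$-values, and condition~(ii)---which forbids $p_k>p_{k+1}$ whenever $f(p_k)=f(p_{k+1})$---forces the natural values $p_b,p_{b+1},\ldots,p_a$ to be strictly increasing; in particular $p_b<p_a$ in the usual order on $[1,n]$. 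But the standing compatibility assumption ($i<_P j\Rightarrow i<j$) applied to $p_a<_P p_b$ gives $p_a<p_b$, a contradiction. The crux is precisely this interplay: condition~(ii) governs how ties in $f$ are broken (always in increasing natural order), and it is exactly the hypothesis that $P$ refines the natural order of $[1,n]$ that makes this tie-breaking incompatible with a poset inversion. Establishing this contradiction closes the converse and completes the proof.
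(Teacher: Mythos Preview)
Your argument is correct. The disjointness reduction via \cite[Lemma~4.5.1]{StanleyCombinatorics} is exactly right, the forward inclusion is immediate, and the contradiction in the converse direction is sound: the chain of equalities forces a constant block, condition~(ii) then forces the permutation values on that block to be increasing in the natural order, and this clashes with the natural-labelling hypothesis applied to $p_a<_P p_b$.

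Note, however, that the paper does not supply its own proof of this lemma: it is quoted directly from Stanley with a citation and no argument. So there is no in-paper proof to compare against. What you have written is essentially the standard proof one finds in Stanley's treatment, so your approach is not merely correct but also the expected one.
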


For $\pi=(p_1,\ldots,p_n)\in \fkS_n$ and $i\in [1,n]$, let $d_i(\pi):=|\{j\in[i,n] 
 : p_j>p_{j+1}\}|$ where we let $p_{n+1}:=n+1$.
In addition, let $\ind(\pi,c_j)$ be the integer satisfying $c_j=p_{\ind(\pi,c_j)}$.
Note that $\ind(\pi,c_1)<\ind(\pi,c_2)<\cdots<\ind(\pi,c_\ell)$ if $\pi\in e(P)$.

\medskip

We are now ready to compute the Hilbert series of $\kk[P]$.

\begin{theorem}\label{theorem:Hilbert series of multigraded Hibi rings}
Let $P=[1,n]$ be a poset and let $C=\{c_1<_P\cdots<_Pc_\ell\}$ be a chain in $P$.
Suppose that $\kk[P]$ is multigraded by $C$.
Then we have
\begin{align}\label{ali:HSHibi}
    \HS(\kk[P]; \tb)=\sum_{\pi\in e(P)}\prod_{i\in[0,\ell]}\frac{t_i^{d_{\ind(\pi,c_{i})}(\pi)-d_{\ind(\pi,c_{i+1})}(\pi)}}{(1-t_i)^{\ind(\pi,c_{i+1})-\ind(\pi,c_i)}},
\end{align}
where we let $\ind(\pi,c_0):=0$, $\ind(\pi,c_{\ell+1}):=n+1$, $d_0(\pi):=d_1(\pi)$ and $d_{n+1}(\pi):=0$.
\end{theorem}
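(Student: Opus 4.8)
The plan is to expand $\HS(\kk[P];\tb)=\sum_{\ab\in\intnonneg^{\ell+1}}\HF(\kk[P];\ab)\,\tb^{\ab}$, interpret each coefficient combinatorially, and then interchange the order of summation. Since $\calF_{L(P)}$ is a \groebner basis, the standard monomials $x_{\alpha_1}\cdots x_{\alpha_z}$ ($\alpha_i\in L(P)$) form a $\kk$-basis of $\kk[P]$, so $\HF(\kk[P];\ab)$ is the number of such monomials of multidegree $\ab$; by the bijection~(\ref{equ:a1to1}) this equals the number of $\sigma\in\calA(P)$ with $\sigma(i)\le s(\ab)$ for all $i\in P$ and $\sigma(c_j)=a_\ell+\cdots+a_j$ for all $j\in[1,\ell]$. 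I would therefore reindex the whole sum by pairs $(\sigma,z)$, where $\sigma\in\calA(P)$ and $z:=s(\ab)\ge\max_{i}\sigma(i)$: such a pair determines $\ab$ uniquely through $a_j=\sigma(c_j)-\sigma(c_{j+1})$ for $j\in[1,\ell]$ (with $\sigma(c_{\ell+1}):=0$) and $a_0=z-\sigma(c_1)$, and this assignment is a bijection onto all standard monomials.

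Fixing $\sigma$ and letting $z$ (equivalently $a_0\ge\max_i\sigma(i)-\sigma(c_1)$) run free produces a geometric series in $t_0$, so with $g_1:=\max_i\sigma(i)$ the contribution of $\sigma$ is
\[
\frac{t_0^{\,g_1-\sigma(c_1)}}{1-t_0}\prod_{j=1}^{\ell}t_j^{\,\sigma(c_j)-\sigma(c_{j+1})}.
\]
Next I would invoke Lemma~\ref{lemma:p-partition} to write $\calA(P)=\bigsqcup_{\pi\in e(P)}S_\pi$ and parametrize each $\sigma\in S_\pi$ by $g_k:=\sigma(p_k)$. The structural input is the descent decomposition of $\pi$-compatible functions: $g_k=d_k(\pi)+m_k$ sets up a bijection between $S_\pi$ and weakly decreasing sequences $m_1\ge\cdots\ge m_n\ge 0$, because $d_k(\pi)-d_{k+1}(\pi)$ equals $1$ exactly at the forced descents $p_k>p_{k+1}$. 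Since $\pi$-compatibility gives $g_1=\max_i\sigma(i)$ and $\pi\in e(P)$ orders the chain positions $\ind(\pi,c_1)<\cdots<\ind(\pi,c_\ell)$, substituting $\sigma(c_j)=g_{\ind(\pi,c_j)}$ splits each exponent above into a constant part in the $d_k(\pi)$ and a variable part in the $m_k$.

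The bookkeeping then does the work. Under the conventions $d_0:=d_1$, $d_{n+1}:=0$, $\ind(\pi,c_0):=0$ and $\ind(\pi,c_{\ell+1}):=n+1$, the constant parts assemble precisely into the numerator $\prod_{i\in[0,\ell]}t_i^{\,d_{\ind(\pi,c_i)}(\pi)-d_{\ind(\pi,c_{i+1})}(\pi)}$. For the variable parts I would pass to the free differences $\delta_k:=m_k-m_{k+1}\ge 0$ ($k\in[1,n]$, $m_{n+1}:=0$) and telescope, rewriting the $m$-dependent weight as $\prod_{k=1}^{n}t_{\iota(k)}^{\delta_k}$, where $\iota(k)=i$ for $\ind(\pi,c_i)\le k<\ind(\pi,c_{i+1})$ when $i\ge 1$ and $\iota(k)=0$ for $1\le k<\ind(\pi,c_1)$. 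Summing the now-independent geometric series over $\delta_1,\ldots,\delta_n\ge 0$ contributes $(1-t_i)^{-(\ind(\pi,c_{i+1})-\ind(\pi,c_i))}$ for each $i\ge 1$ and $(1-t_0)^{-(\ind(\pi,c_1)-1)}$ for $i=0$; the leftover factor $1/(1-t_0)$ from the $a_0$-summation supplies the last power of $(1-t_0)$ and completes the $t_0$-denominator to $(1-t_0)^{\ind(\pi,c_1)-\ind(\pi,c_0)}$. Multiplying numerator by denominator and summing over $\pi\in e(P)$ gives~(\ref{ali:HSHibi}). The main obstacle is exactly this index accounting: verifying that the descent decomposition distributes the variables $\delta_k$ among the blocks cut out by consecutive chain positions in agreement with the claimed denominator, and that the two boundary blocks $i=0$ and $i=\ell$ close up correctly through the stated conventions and the extra $t_0$-factor coming from the unbounded parameter $a_0$.
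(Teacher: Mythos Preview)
Your argument is correct and follows essentially the same route as the paper: both rely on the bijection~(\ref{equ:a1to1}), the decomposition $\calA(P)=\bigsqcup_{\pi\in e(P)}S_\pi$ of Lemma~\ref{lemma:p-partition}, and the descent shift $\sigma(p_k)=d_k(\pi)+m_k$ to reduce to independent geometric series cut out by the chain positions $\ind(\pi,c_i)$. The only organizational difference is that the paper first computes $|S_\pi\cap\calA(P)_\ab|$ as a product of multiset coefficients and then sums over $\ab$, whereas you sum the monomial weight of each $(\sigma,z)$ directly via the change of variables to the gaps $\delta_k$; the two computations are equivalent and yield the same factorization.
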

\begin{proof}
    For $\ab\in \ZZ^{\ell+1}_{\ge 0}$, let $\calA(P)_\ab:=\{\sigma\in \calA(P) : \sigma(i)\leq s(\ab) \text{ for any }i\in P,\; \sigma(c_j)=a_\ell+\cdots+a_{j} \text{ for any }j\in[1,\ell]\}$.
    From the one-to-one correspondence (\ref{equ:a1to1}), we obtain $\HF(\kk[P];\ab)=|\calA(P)_\ab|$.
    Moreover, for $\pi\in e(P)$ and a function $\sigma: [1,n]\to \ZZ_{\ge 0}$, we can see that $\sigma\in S_\pi\cap \calA(P)_\ab$ if and only if $\sigma$ satisfies 
    \begin{equation}\label{equ:ineq}
        \begin{split}
        &\sigma(c_j)=a_\ell+a_{\ell-1}+\cdots+a_j \text{ for any }j\in [1,\ell] \text{ and}\\
        &s(\ab)-d_0(\pi)\ge \sigma(p_1)-d_1(\pi)\ge \sigma(p_2)-d_2(\pi)\ge \cdots \ge \sigma(p_n)-d_n(\pi)\ge 0.
        \end{split}
    \end{equation}
     Therefore, the number of functions satisfying condition (\ref{equ:ineq}) is
     \begin{multline*}
     \prod_{i\in[0,\ell]}\left( \!\! \binom{(\sigma(c_i)-d_{\ind(\pi,c_i)}(\pi))-(\sigma(c_{i+1})-d_{\ind(\pi,c_{i+1})}(\pi))+1}{\ind(\pi,c_{i+1})-\ind(\pi,c_i)-1} \!\! \right)\\=
     \prod_{i\in[0,\ell]}\left( \!\! \binom{a_i-d_{\ind(\pi,c_i)}(\pi)+d_{\ind(\pi,c_{i+1})}(\pi)+1}{\ind(\pi,c_{i+1})-\ind(\pi,c_i)-1} \!\! \right),
     \end{multline*}
     where we let $\sigma(c_0):=s(\ab)$.
     By Lemma~\ref{lemma:p-partition}, we get
     \begin{align*}
         \HS(\kk[P];\tb)&=\sum_{\ab\in \ZZ^{\ell+1}_{\geq 0}}\left(\sum_{\pi\in e(P)}|S_\pi\cap \calA(P)_\ab|\right)\tb^\ab \\
         &=\sum_{\pi\in e(P)}\prod_{i\in[0,\ell]}\left(\sum_{a_i\in \ZZ_{\geq 0}} \left( \!\! \binom{a_i-d_{\ind(\pi,c_i)}(\pi)+d_{\ind(\pi,c_{i+1})}(\pi)+1}{\ind(\pi,c_{i+1})-\ind(\pi,c_i)-1} \!\! \right)t_i^{a_i}\right) \\
         &=\sum_{\pi\in e(P)}\prod_{i\in[0,\ell]}\frac{t_i^{d_{\ind(\pi,c_{i})}(\pi)-d_{\ind(\pi,c_{i+1})}(\pi)}}{(1-t_i)^{\ind(\pi,c_{i+1})-\ind(\pi,c_i)}}.
     \end{align*}
\end{proof}

Let $m$ be a nonnegative integer and let $f:\fkp(C)\to [0,m]$ be a map.
In the case where $S_{L(P)}$ is multigraded by $f_C$, we can calculate the Hilbert series of $\kk[P]$ by replacing $t_i$ with $t_{f(C_i)}$ in (\ref{ali:HSHibi}), so we immediately get the following corollary:
\begin{corollary}\label{corollary:Hilbert series}
     Work with the same notation as above. Suppose that $S_{L(P)}$ is multigraded by $f_C$.
     Then we have 
     \[
     \HS(\kk[P]; \tb)=\sum_{\pi\in e(P)}\prod_{i\in[0,\ell]}\frac{t_{f(C_i)}^{d_{\ind(\pi,c_{i})}(\pi)-d_{\ind(\pi,c_{i+1})}(\pi)}}{(1-t_{f(C_i)})^{\ind(\pi,c_{i+1})-\ind(\pi,c_i)}}.
     \]
\end{corollary}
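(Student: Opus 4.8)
The plan is to exhibit the $f_C$-grading as a coarsening of the finer $C$-grading from Theorem~\ref{theorem:Hilbert series of multigraded Hibi rings}, and then to recover the $f_C$-graded Hilbert series from the $C$-graded one by a monomial substitution of variables. First I would record the elementary but crucial compatibility between the two degree functions. Let $\varphi\colon \ZZ^{\ell+1}\to\ZZ^{m+1}$ be the linear map determined by $\varphi(\eb_i)=\eb_{f(C_i)}$ for $i\in[0,\ell]$. For a poset ideal $\alpha\in L(P)$ with $\alpha\cap C=C_i$ we have $\deg_C(x_\alpha)=\eb_i$ and $\deg_{f_C}(x_\alpha)=\eb_{f(C_i)}=\varphi(\eb_i)$, so $\deg_{f_C}(x_\alpha)=\varphi(\deg_C(x_\alpha))$ for every variable, and hence $\deg_{f_C}(u)=\varphi(\deg_C(u))$ for every monomial $u$ of $\kk[P]$.

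Second, I would deduce from this that the $C$-grading refines the $f_C$-grading. Concretely, since the $f_C$-degree of any monomial is the $\varphi$-image of its $C$-degree, each $f_C$-homogeneous component decomposes as
\[
(\kk[P])^{f_C}_{\bb}=\bigoplus_{\ab\in\intnonneg^{\ell+1},\ \varphi(\ab)=\bb}(\kk[P])^{C}_{\ab},
\]
where the superscripts indicate the grading used. The fibers of $\varphi$ over $\intnonneg^{\ell+1}$ are finite (for fixed $\bb$ the conditions $\sum_{i:\,f(C_i)=j}a_i=b_j$ admit only finitely many solutions in nonnegative integers), so each $f_C$-graded piece is finite-dimensional and $\HF(\kk[P];\bb)=\sum_{\varphi(\ab)=\bb}\HF(\kk[P];\ab)$. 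Multiplying by $\tb^{\bb}$, summing, and then regrouping the double sum as a single sum over $\ab$ gives the $f_C$-graded series as $\sum_{\ab}\HF(\kk[P];\ab)\,\tb^{\varphi(\ab)}$. Since $\tb^{\varphi(\ab)}=\prod_{i\in[0,\ell]}t_{f(C_i)}^{a_i}$, this is exactly the series obtained from the $C$-graded Hilbert series by the substitution $t_i\mapsto t_{f(C_i)}$ for all $i\in[0,\ell]$.

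Finally, I would apply this substitution term by term to the closed formula of Theorem~\ref{theorem:Hilbert series of multigraded Hibi rings}, yielding the asserted expression. The main (and essentially only) point needing care is the justification that the substitution $t_i\mapsto t_{f(C_i)}$ may be carried out directly on the rational closed form, and not merely on the formal power series. This is legitimate because the substitution is a monomial-to-monomial assignment, hence extends to a well-defined continuous $\kk$-algebra homomorphism between the relevant power series rings, and because the finiteness of the fibers of $\varphi$ guarantees that the image series is again an honest power series expansion of a rational function; both sides of the claimed identity are rational functions that agree as power series, so they agree as rational functions.
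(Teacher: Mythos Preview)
Your proposal is correct and takes essentially the same approach as the paper: the paper's entire argument is the single remark preceding the corollary that one obtains the formula by replacing $t_i$ with $t_{f(C_i)}$ in the expression of Theorem~\ref{theorem:Hilbert series of multigraded Hibi rings}. You have simply supplied the details behind that substitution (the coarsening via $\varphi$ and the fiber-finiteness), which the paper leaves implicit.
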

In particular, when $m=0$, we obtain the Hilbert series of a $\ZZ$-graded Hibi ring, which coincides with the result of \cite[Proposition~2.3]{Hibi1991Hilbert}.

\begin{example}\label{example:multigraded Hilbert series of N-poset}
  Let $P$ be given by Figure~\ref{fig:N-poset}.
  Then we have 
  \[e(P)=\{(1,2,3,4),(1,2,4,3),(2,1,3,4),(2,1,4,3),(2,4,1,3)\}.\]
  We compute the Hilbert series of $\kk[P]$ using Theorem~\ref{theorem:Hilbert series of multigraded Hibi rings}.
  If $S_{L(P)}$ is multigraded by $C_1:=\{2,3\}$, then 
  \begin{align*}
   \HS(\kk[P]; \tb)=&\frac{1}{(1-t_0)^2(1-t_1)(1-t_2)^2}+\frac{t_1}{(1-t_0)^2(1-t_1)^2(1-t_2)}+ \\
   &\frac{t_1}{(1-t_0)(1-t_1)^2(1-t_2)^2}+\frac{t_1^2}{(1-t_0)(1-t_1)^3(1-t_2)}+\frac{t_1}{(1-t_0)(1-t_1)^3(1-t_2)} \\
   =& \frac{1 + t_1 - 2 t_0 t_1 - 2 t_1 t_2 + t_0 t_1 t_2 + t_0 t_1^2 t_2}{(1-t_0)^2(1-t_1)^3(1-t_2)^2} \\
   =& \frac{1 - 2 t_0 t_1 - t_1^2 - 2 t_1 t_2 + 2 t_0 t_1^2 + t_0 t_1 t_2 + 2 t_1^2 t_2 - t_0 t_1^3 t_2}{(1-t_0)^2(1-t_1)^4(1-t_2)^2}.
  \end{align*}
  On the other hand, if $S_{L(P)}$ is multigraded by $C_2:=\{2,4\}$, then
  \begin{align*}
   \HS(\kk[P]; \tb)=&\frac{1}{(1-t_0)^2(1-t_1)^2(1-t_2)}+\frac{t_2}{(1-t_0)^2(1-t_1)(1-t_2)^2}+ \\
   &\frac{t_1}{(1-t_0)(1-t_1)^3(1-t_2)}+\frac{t_1t_2}{(1-t_0)(1-t_1)^2(1-t_2)^2}+\frac{t_2}{(1-t_0)(1-t_1)(1-t_2)^3} \\
   =& \frac{1 - t_0 t_1 - t_0 t_2 - 3 t_1 t_2 + 3 t_0 t_1 t_2 + t_1^2 t_2 + t_1 t_2^2 - t_0 t_1^2 t_2^2}
   {(1-t_0)^2(1-t_1)^3(1-t_2)^3}.
  \end{align*}
  
  In either case, by setting $t_0=t_1=t_2=t$, we obtain the Hilbert series of the standard $\ZZ$-graded Hibi ring of $P$ as follows:
  \begin{align*}
   \HS(\kk[P]; t,t,t)= \frac{1 - 5t^2 + 5t^3 - t^5}{(1-t)^8}=\frac{1+3t+t^2}{(1-t)^5}.
  \end{align*}
\end{example}

\begin{example}\label{example:multigraded Hilbert series of Boolean lattice}
    Let $P:=[1,n]$ be an $n$-element antichain, that is, $i$ and $j$ are incomparable for any two distinct elements $i,j\in P$.
    Let $C:=\{n\}$ and suppose that $S_{L(P)}$ is multigraded by $C$.
    From Theorem~\ref{theorem:Hilbert series of multigraded Hibi rings}, we have
    \[
    \HS(\kk[P]; \tb)=\sum_{\pi\in \fkS_n}\frac{t_0^{d_0(\pi)-d_{\ind(\pi,n)}(\pi)}t_1^{d_{\ind(\pi,n)}(\pi)}}{(1-t_0)^{\ind(\pi,n)}(1-t_1)^{n+1-\ind(\pi,n)}}.
    \]

    For $k\in [1,n]$, $k_1\in[0,k-2]$ and $k_2\in[0,n-k-1]$, we consider a permutation $\pi=(p_1,\ldots,p_n)\in\fkS_n$ such that $p_k=n$, $|\{i\in [1,k-1] : p_i>p_{i+1}\}|=k_1$ and $|\{i\in [k+1,n] : p_i>p_{i+1}\}|=k_2$.
    The number of such permutations is $\binom{n-1}{k-1}A_{k_1,k-1}A_{k_2,n-k}$ where $A_{s,r}$ denotes the \textit{Eulerian number}, which is the number of permutations of the numbers $1$ to $r$ in which exactly $s$ elements are less than the previous element. 
    Therefore, we obtain
    \begin{align*}
    \HS(\kk[P]; \tb)&=\frac{\sum_{k_1=0}^{n-2}A_{k_1,n-1}t_0^{k_1}}{(1-t_0)^n(1-t_1)}+\sum_{k=1}^{n-1}\frac{\binom{n-1}{k-1}\sum_{k_1=0}^{k-2}\sum_{k_2=0}^{n-k-1}A_{k_1,k-1}A_{k_2,n-k}t_0^{k_1}t_1^{k_2+1}}{(1-t_0)^k(1-t_1)^{n+1-k}} \\
    &=\frac{A_{n-1}(t_0)}{(1-t_0)^n(1-t_1)}+\sum_{k=1}^{n-1}\frac{\binom{n-1}{k-1}A_{k-1}(t_0)A_{n-k}(t_1)t_1}{(1-t_0)^k(1-t_1)^{n+1-k}},
    \end{align*}
    where $A_r(t):=\sum_{s\in[0,r-1]}A_{s,r}t^s$ (we define $A_0(t):=1$) denotes the \textit{Eulerian polynomial}.

    By setting $t_0=t_1=t$, we can compute the Hilbert series of the standard $\ZZ$-graded Hibi ring of $P$ as follows:
    \[
    \HS(\kk[P]; t,t)=\frac{A_{n-1}(t)+\sum_{k=1}^{n-1}\binom{n-1}{k-1}A_{k-1}(t)A_{n-k}(t)t}{(1-t)^{n+1}}=\frac{A_n(t)}{(1-t)^{n+1}},
    \]
    where we used the recurrence relation $A_n(t)=A_{n-1}(t)+\sum_{k=1}^{n-1}\binom{n-1}{k-1}A_{k-1}(t)A_{n-k}(t)t$ for the Eulerian polynomials (see, e.g., \cite[Theorem~1.5]{Petersen2015Eulerian}).
\end{example}

\section{Multidegree polynomials of multigraded Hibi rings}\label{section:Multidegree polynomials of multigraded Hibi rings}
In this section, we compute multidegree polynomials of multigraded Hibi rings.
For a standard $\ZZ^{\ell+1}$-graded polynomial ring $S = \kk[x_1, \ldots, x_m]$ and a homogeneous ideal $I$, the Hilbert series of $S/I$ can be
written as
\begin{equation*}
    \HS(S/I; \tb) = \frac{\calK(S/I; \tb)}{\prod_{i=1}^{m}(1-\tb^{\deg(x_i)})}.
\end{equation*}
The numerator $\calK(S/I; \tb) \in \ZZ[\tb]$ is called the \textit{$\calK$-polynomial} of $S/I$.

\begin{definition}[See, e.g.,~{\cite[Definition 8.45]{MillerSturmfels}}]\label{definition:multidegree}
    The \textit{multidegree polynomial} of a $\ZZ^{\ell+1}$-graded ring $S/I$ is the sum $\calC(S/I; \tb) \in \ZZ[\tb]$
    of all terms in
    \begin{equation*}
        \calK(S/I; 1-\tb) \coloneqq \calK(S/I; 1-t_0, 1-t_1, \ldots, 1-t_{\ell})
    \end{equation*}
    having total degree $\codim(S/I)$.
\end{definition}

Multidegree polynomials have two characterizing properties.
Firstly, multidegree polynomials are additive.
For a prime ideal $\fkp$, let $\length_{S_{\fkp}}(S/I)$ denote the length of $(S/I)_{\fkp}$ as an $S_{\fkp}$-module.
For a standard $\ZZ^{\ell+1}$-graded polynomial ring $S = \kk[x_1, \ldots, x_m]$ and a homogeneous ideal $I$,
we have
\begin{equation*}
    \calC(S/I; \tb) = \sum_{\fkp} \length_{S_{\fkp}}(S/I) \cdot \calC(S/\fkp; \tb),
\end{equation*}
where $\fkp$ are associated primes of $S/I$ with $\dim(S/\fkp) = \dim(S/I)$.
Secondly, multidegree polynomials are degenerative. For a monomial order $\preceq$ on $S$,
we have
\begin{equation*}
    \calC(S/I; \tb) = \calC(S/\ini_{\preceq}(I); \tb).
\end{equation*}

The multidegree polynomial of a prime ideal generated by variables can be easily computed.
\begin{theorem}[See, e.g.,~{\cite[Theorem 8.44]{MillerSturmfels}}]\label{theorem:multidegree of prime ideal}
    Let $x_{i_1}, \ldots, x_{i_r}$ are variables in $S$. Then
    \begin{equation*}
        \calC(S/(x_{i_1}, \ldots, x_{i_r}); \tb) = \langle \deg(x_{i_1}), \tb \rangle \cdots \langle \deg(x_{i_r}), \tb \rangle,
    \end{equation*}
    where $\langle \cdot, \cdot \rangle$ denotes the usual inner product.
\end{theorem}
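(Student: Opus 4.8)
The plan is to compute the $\calK$-polynomial of $S/(x_{i_1},\ldots,x_{i_r})$ directly and then read off its lowest-degree part after the substitution $\tb\mapsto 1-\tb$. First I would observe that $S/(x_{i_1},\ldots,x_{i_r})$ is itself a polynomial ring, namely the polynomial ring on the variables $x_j$ with $j\notin\{i_1,\ldots,i_r\}$. Hence its multigraded Hilbert series is
\[
\HS(S/(x_{i_1},\ldots,x_{i_r});\tb)=\prod_{j\notin\{i_1,\ldots,i_r\}}\frac{1}{1-\tb^{\deg(x_j)}}.
\]
Comparing this with the defining identity $\HS(S/I;\tb)=\calK(S/I;\tb)/\prod_{i=1}^{m}(1-\tb^{\deg(x_i)})$ recalled before Definition~\ref{definition:multidegree} and cancelling the common factors, I obtain
\[
\calK(S/(x_{i_1},\ldots,x_{i_r});\tb)=\prod_{k=1}^{r}\bigl(1-\tb^{\deg(x_{i_k})}\bigr).
\]

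Next I would perform the substitution $t_s\mapsto 1-t_s$ factor by factor. Writing $\deg(x_{i_k})=(a_0,\ldots,a_\ell)$, a Taylor expansion of each factor gives
\[
1-\prod_{s=0}^{\ell}(1-t_s)^{a_s}=\sum_{s=0}^{\ell}a_s t_s+(\text{terms of total degree}\ge 2)=\langle\deg(x_{i_k}),\tb\rangle+(\text{higher order}).
\]
Thus each of the $r$ factors of $\calK(S/I;1-\tb)$ has lowest-degree homogeneous part equal to the linear form $\langle\deg(x_{i_k}),\tb\rangle$, of total degree exactly $1$.

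Finally, since $x_{i_1},\ldots,x_{i_r}$ are distinct variables, the ideal they generate has height $r$, so $\codim(S/I)=r$. By Definition~\ref{definition:multidegree}, $\calC(S/I;\tb)$ is the sum of the total-degree-$r$ terms of the product of these $r$ factors. As a product of $r$ power series each with lowest degree $1$ has minimal total degree $r$, and that degree-$r$ part is obtained precisely by multiplying together the degree-$1$ terms of the individual factors, I conclude
\[
\calC(S/I;\tb)=\prod_{k=1}^{r}\langle\deg(x_{i_k}),\tb\rangle.
\]
The only point requiring care, and the step I view as the main obstacle, is justifying that the degree-$r$ part of the product receives no contribution other than the product of the linear terms and suffers no cancellation. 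This becomes immediate once one notes that each factor starts in degree $1$, so extracting total degree $r=\codim(S/I)$ from a product of exactly $r$ such factors forces selecting the degree-$1$ term from every factor; any choice of a higher-order term strictly raises the total degree above $r$.
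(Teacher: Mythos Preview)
Your proof is correct. Note, however, that the paper does not give its own proof of this statement: it is quoted as a known result from \cite[Theorem~8.44]{MillerSturmfels} and used as a black box in the computation of multidegree polynomials of Hibi rings. So there is no ``paper's proof'' to compare against.

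One small remark: in the setting of this paper, $S$ is \emph{standard} $\ZZ^{\ell+1}$-graded, meaning each variable has degree equal to a unit vector $\eb_j$. In that case your Taylor-expansion step is vacuous: $1-\tb^{\deg(x_{i_k})}=1-t_j$, so after the substitution $t_j\mapsto 1-t_j$ the factor becomes exactly $t_j=\langle\deg(x_{i_k}),\tb\rangle$, with no higher-order terms at all. Thus $\calK(S/I;1-\tb)$ is already homogeneous of degree $r$, and no ``extracting the degree-$r$ part'' argument is needed. Your argument is more general than what the paper requires, but not incorrect for that.
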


For more details on multidegree polynomials, see~\cite[Section 8.5]{MillerSturmfels}.

Now we compute multidegree polynomials of multigraded Hibi rings.
We use the same terminology as in Section~\ref{section:Hilbert series of multigraded Hibi rings}.
In addition, let $\scrM_{L(P)}$ be the set of maximal chains of $L(P)$ and let $\calM^c:=L(P)\setminus \calM$ for $\calM\in \scrM_{L(P)}$.

\begin{proposition}\label{proposition:multidegree of Hibi rings}
    Let $P=[1,n]$ be a poset and let $C=\{c_1<_P\cdots<_Pc_\ell\}$ be a chain in $P$.
    Suppose that $\kk[P]$ is multigraded by $C$.
    Then, the multidegree polynomial of $\kk[P]$ is
    \begin{equation*}
        \calC(\kk[P]; \tb) = \sum_{\calM\in \scrM_{L(P)}}\tb^{\deg_{C}({\calM^c})},
    \end{equation*}
    where $\deg_{C}({\calM^c}) = \deg_C(\prod_{\alpha \in \calM^c}x_{\alpha})$.
\end{proposition}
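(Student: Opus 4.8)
The plan is to combine the two characterizing properties of multidegree polynomials recalled above: first \emph{degenerate} to a monomial ideal, and then exploit \emph{additivity} over minimal primes, which become transparent once the initial ideal is recognized as a Stanley--Reisner ideal.

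First I would pass to an initial ideal. Fix a compatible monomial order $\preceq$. By the equivalence (3) recalled in the introduction, $\calF_{L(P)}$ is a \groebner basis, so
\[
\ini_{\preceq}(I_{L(P)}) = (x_{\alpha}x_{\beta} : \alpha,\beta \in L(P) \text{ incomparable}).
\]
Each binomial $f_{\alpha,\beta}$ is $\deg_C$-homogeneous by Proposition~\ref{prop:multigrading is compatible}, so this initial ideal is homogeneous for the multigrading, and the degenerative property gives $\calC(\kk[P]; \tb) = \calC(S_{L(P)}/\ini_{\preceq}(I_{L(P)}); \tb)$. The next step is to read off the combinatorics of this squarefree monomial ideal: its minimal nonfaces are exactly the incomparable pairs $\{\alpha,\beta\}$, so it is the Stanley--Reisner ideal of the order complex $\Delta$ of the poset $L(P)$, whose faces are the chains of $L(P)$ and whose facets are precisely the maximal chains, i.e.\ the elements of $\scrM_{L(P)}$. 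Because $L(P)$ is a distributive lattice it is graded with rank function $\alpha \mapsto |\alpha|$; hence every maximal chain runs from $\emptyset$ to $P$ with exactly $n+1$ elements, so $\Delta$ is \emph{pure} of dimension $n$. Consequently the minimal primes of $\ini_{\preceq}(I_{L(P)})$ are the monomial primes $P_{\calM} := (x_{\alpha} : \alpha \in \calM^{c})$ for $\calM \in \scrM_{L(P)}$, all of the same (top) dimension $\dim(S_{L(P)}/P_{\calM}) = n+1 = \dim(\kk[P])$; and since the ideal is radical it has no embedded primes and $\length_{(S_{L(P)})_{P_{\calM}}}(S_{L(P)}/\ini_{\preceq}(I_{L(P)})) = 1$ for every $\calM$.

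I would then feed this into the additivity property. Since every minimal prime is a top-dimensional associated prime with length one,
\[
\calC(S_{L(P)}/\ini_{\preceq}(I_{L(P)}); \tb) = \sum_{\calM \in \scrM_{L(P)}} \calC(S_{L(P)}/P_{\calM}; \tb).
\]
Each $P_{\calM}$ is generated by the variables $\{x_{\alpha} : \alpha \in \calM^{c}\}$, so Theorem~\ref{theorem:multidegree of prime ideal} and the fact that $\deg_C(x_\alpha) = \eb_{|\alpha \cap C|}$ is a unit vector give
\[
\calC(S_{L(P)}/P_{\calM}; \tb) = \prod_{\alpha \in \calM^{c}} \langle \deg_C(x_{\alpha}), \tb \rangle = \prod_{\alpha \in \calM^{c}} t_{|\alpha \cap C|} = \tb^{\deg_{C}(\calM^{c})}.
\]
Summing over all maximal chains yields the asserted formula.

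The main obstacle is the structural bookkeeping in the middle step: one must verify carefully that $\ini_{\preceq}(I_{L(P)})$ is exactly the order-complex Stanley--Reisner ideal and, crucially, that the complex is \emph{pure}, so that every maximal chain contributes and each contributes with multiplicity one. Purity is precisely the gradedness of the distributive lattice $L(P)$. As a consistency check one should confirm $\codim(\kk[P]) = |L(P)| - (n+1) = |\calM^{c}|$, which guarantees that each product $\tb^{\deg_C(\calM^c)}$ has total degree equal to $\codim(\kk[P])$, matching the degree-selection in Definition~\ref{definition:multidegree}; the remaining manipulations are the routine inner-product evaluations above.
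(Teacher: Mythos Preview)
Your proof is correct and follows essentially the same approach as the paper: degenerate to the compatible initial ideal, identify it with the Stanley--Reisner ideal of the order complex of $L(P)$, use purity (gradedness of the distributive lattice) to conclude all minimal primes have the same dimension with length one, and then apply additivity together with Theorem~\ref{theorem:multidegree of prime ideal}. Your exposition is in fact slightly more detailed in justifying purity and in making the inner-product evaluation explicit, but the argument is the same.
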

\begin{proof}
    Since multidegree polynomials are degenerative, we compute $\calC(S_{L(P)}/\ini_{\preceq}(I_{L(P)}); \tb)$ with a compatible monomial
    order $\preceq$. We regard $\ini_{\preceq}(I_{L(P)})$ as a Stanley-Reisner ideal associated with a simplicial
    complex $\Delta_P$ with the vertex set $L(P)$. Since $\ini_{\preceq}(I_{L(P)})$ is generated by $x_{\alpha}x_{\beta}$
    where $\alpha, \beta \in L(P)$ are incomparable, the simplicial complex $\Delta_P$ consists of chains of $L(P)$.
    In particular, the facets of $\Delta_P$ correspond to maximal chains of $L(P)$. Therefore, we have the primary decomposition
    \begin{equation*}
        \ini_{\preceq}(I_{L(P)}) = \bigcap_{\calM\in \scrM_{L(P)}} \fkp_{\calM^c},
    \end{equation*}
    where $\fkp_{\calM^c} = (x_{\alpha}: \alpha \in \calM^c)$.
    Since the distributive lattice $L(P)$ is pure (i.e., all its maximal chains have the same length), the initial ideal
    $\ini_{\preceq}(I_{L(P)})$ is unmixed. We can easily check that, for each maximal chain $\calM$, the length of 
    $(S_{L(P)}/\ini_{\preceq}(I_{L(P)}))_{\fkp_{\calM^c}}$ is equal to $1$.
    Thus, by the additivity of multidegree polynomials and by Theorem~\ref{theorem:multidegree of prime ideal}, we have
    \begin{equation*}
        \calC(S_{L(P)}/\ini_{\preceq}(I_{L(P)}); \tb) = \sum_{\calM\in \scrM_{L(P)}}\calC(S_{L(P)}/\fkp_{\calM^c}; \tb) = \sum_{\calM\in \scrM_{L(P)}}\tb^{\deg_{C}({\calM^c})}.
    \end{equation*}
\end{proof}

\begin{corollary}\label{corollary:multidegree polynomial with f}
     Work with the same notation as above. Suppose that $S_{L(P)}$ is multigraded by $f_C$.
     Then we have 
     \begin{equation*}
         \calC(\kk[P]; \tb) = \sum_{\calM\in \scrM_{L(P)}}\tb^{\deg_{f_C}({\calM^c})},
     \end{equation*}
     where $\deg_{f_C}({\calM^c}) = \deg_{f_C}(\prod_{\alpha \in \calM^c}x_{\alpha})$.
\end{corollary}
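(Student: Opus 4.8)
The plan is to observe that the proof of Proposition~\ref{proposition:multidegree of Hibi rings} is almost entirely independent of the choice of multigrading, so that the corollary follows by rerunning that argument with $\deg_{f_C}$ in place of $\deg_C$. Concretely, I would again fix a compatible monomial order $\preceq$ and use the degenerative property of multidegree polynomials to reduce to computing $\calC(S_{L(P)}/\ini_{\preceq}(I_{L(P)}); \tb)$. The identification of $\ini_{\preceq}(I_{L(P)})$ with the Stanley--Reisner ideal $\Delta_P$ of chains of $L(P)$, its primary decomposition $\ini_{\preceq}(I_{L(P)}) = \bigcap_{\calM\in\scrM_{L(P)}}\fkp_{\calM^c}$, the purity of $L(P)$, and the fact that each localization $(S_{L(P)}/\ini_{\preceq}(I_{L(P)}))_{\fkp_{\calM^c}}$ has length one are all statements about the lattice $L(P)$ alone and involve no grading. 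Hence these steps carry over verbatim.

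The only grading-dependent step is the final evaluation of $\calC(S_{L(P)}/\fkp_{\calM^c}; \tb)$ via Theorem~\ref{theorem:multidegree of prime ideal}. Under the multigrading $\deg_{f_C}$ we have $\deg_{f_C}(x_\alpha) = \eb_{f(\alpha\cap C)}$, so $\langle \deg_{f_C}(x_\alpha), \tb\rangle = t_{f(\alpha\cap C)}$, and therefore
\[
\calC(S_{L(P)}/\fkp_{\calM^c}; \tb) = \prod_{\alpha\in\calM^c}\langle \deg_{f_C}(x_\alpha), \tb\rangle = \prod_{\alpha\in\calM^c} t_{f(\alpha\cap C)} = \tb^{\deg_{f_C}(\calM^c)}.
\]
Summing over all maximal chains $\calM\in\scrM_{L(P)}$ by additivity then yields the asserted formula.

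Alternatively, I could deduce the corollary from Proposition~\ref{proposition:multidegree of Hibi rings} by a substitution argument, paralleling the passage from Theorem~\ref{theorem:Hilbert series of multigraded Hibi rings} to Corollary~\ref{corollary:Hilbert series}. The grading $\deg_{f_C}$ is the coarsening of $\deg_C$ along the monomial substitution $t_i\mapsto t_{f(C_i)}$; as in the proof of Corollary~\ref{corollary:Hilbert series}, this substitution carries the $C$-graded Hilbert series to the $f_C$-graded one, and, matching numerators against the common denominator $\prod_{\alpha}\bigl(1-\tb^{\deg(x_\alpha)}\bigr)$, it carries the $\calK$-polynomial $\calK_C$ to $\calK_{f_C}$. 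Since each variable is sent to a single variable, the substitution preserves the total degree of every monomial, and since $\codim(\kk[P])$ is an invariant of $I_{L(P)}$ and does not depend on the grading, extracting the degree-$\codim$ part commutes with it. Thus $\calC_{f_C}(\tb) = \calC_C(\tb)\big|_{t_i\mapsto t_{f(C_i)}}$, and applying the substitution to the formula of Proposition~\ref{proposition:multidegree of Hibi rings} gives $\sum_{\calM}\tb^{\deg_{f_C}(\calM^c)}$.

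There is essentially no serious obstacle here; the real content is simply that nothing in the geometric reduction depends on the particular standard multigrading chosen. The only point requiring a line of care is, in the second approach, the commutation of the degree-$\codim$ truncation with $t_i\mapsto t_{f(C_i)}$: this holds precisely because the substitution is by single variables, so total degrees are unchanged, and because $\codim$ is a property of the ideal rather than of the grading. For this reason I would present the first (re-run) approach as the main proof, since it avoids even this minor subtlety and makes transparent that only the final appeal to Theorem~\ref{theorem:multidegree of prime ideal} changes.
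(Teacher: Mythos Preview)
Your proposal is correct. The paper does not give an explicit proof of this corollary at all; it simply states it immediately after Proposition~\ref{proposition:multidegree of Hibi rings}, treating it as an evident consequence. Your first approach (rerunning the proof of the proposition with $\deg_{f_C}$ in place of $\deg_C$) is exactly the argument the paper leaves implicit, and your second approach (substitution $t_i\mapsto t_{f(C_i)}$, paralleling Corollary~\ref{corollary:Hilbert series}) is an equally valid alternative that the paper hints at elsewhere but does not spell out here.
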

In particular, when $m=0$, the multidegree polynomial $\calC(\kk[P]; t_0)$ consists of exactly one term. Its coefficient is the number of maximal chains of $L(P)$, which coincides with the degree
of $\kk[P]$.

\begin{example}\label{multidegree of N-poset}
    Let $P$ be given by Figure~\ref{fig:N-poset}.
  Then $L(P)$ has the following 5 maximal chains:
  \begin{align*}
  \emptyset\subset \{1\}\subset \{1,2\}\subset \{1,2,3\} \subset\{1,2,3,4\}, \\
  \emptyset\subset \{1\}\subset \{1,2\}\subset \{1,2,4\} \subset\{1,2,4,3\}, \\
  \emptyset\subset \{2\}\subset \{2,1\}\subset \{2,1,3\} \subset\{2,1,3,4\}, \\
  \emptyset\subset \{2\}\subset \{2,1\}\subset \{2,1,4\} \subset\{2,1,4,3\}, \\
  \emptyset\subset \{2\}\subset \{2,4\}\subset \{2,4,1\} \subset\{2,4,1,3\}.
  \end{align*}
  From Proposition~\ref{proposition:multidegree of Hibi rings}, if $S_{L(P)}$ is multigraded by $C_1:=\{2,3\}$, then we have
  \[
  \calC(\kk[P];\tb)=t_1^3+t_1^2t_2+t_0t_1^2+2t_0t_1t_2.
  \]
  On the other hand, if $S_{L(P)}$ is multigraded by $C_2:=\{2,4\}$, the we have
  \[
  \calC(\kk[P];\tb)=t_1t_2^2+t_1^2t_2+t_0t_2^2+t_0t_1t_2+t_0t_1^2.
  \]
    In either case, by setting $t_0=t_1=t_2=t$, we obtain
  \begin{align*}
   \calC(\kk[P];t,t,t) = 5t^3.
  \end{align*}
  The degree of $5t^3$ coincides with the codimension of $\kk[P]$ and
  its coefficient $5$ is the degree of the $\ZZ$-graded Hibi ring $\kk[P]$.
\end{example}

\begin{example}\label{multidegree of Boolean lattice}
    We revisit the poset $P$ and multigradings described in Example~\ref{example:multigraded Hilbert series of Boolean lattice}.
    Note that for $\alpha\in L(P)$, we have $\deg_C(x_\alpha)=\eb_0$ if $n\notin \alpha$ and $\deg_C(x_\alpha)=\eb_1$ otherwise.
    A permutation $\pi=(p_1,\ldots,p_n)\in \fkS_n$ corresponds to a maximal chain $\calM_\pi:=\{\emptyset\subset \{p_1\}\subset \{p_1,p_2\}\subset \cdots \subset\{p_1,\ldots,p_n\}\}$ of $L(P)$ (this correspondence is bijection).
    For any $k\in [1,n]$, the cardinality of $\fkS_{n,k}:=\{(p_1,\ldots,p_n)\in \fkS_n : p_k=n\}$ is equal to $(n-1)!$ and $\tb^{\deg_C(\calM_\pi)}=t_0^{k}t_1^{n+1-k}$ for any $\pi\in \fkS_{n,k}$.
    Therefore, by Proposition~\ref{proposition:multidegree of Hibi rings}, we have
    \begin{equation*}
        \calC(\kk[P]; \tb) = \sum_{\pi\in \fkS_n}\tb^{\deg_{C}(\calM_\pi^c)}=\sum_{k=1}^n\sum_{\pi\in \fkS_{n,k}}\tb^{\deg_{C}(\calM_\pi^c)}=(n-1)!\sum_{k=1}^nt_0^{2^{n-1}-k}t_1^{2^{n-1}-n-1+k}.
    \end{equation*}

    By setting $t_0=t_1=t$, we obtain
    \begin{equation*}
        \calC(\kk[P]; t,t) = (n-1)!\sum_{k=1}^nt^{2^n-n-1} = n!t^{2^n-n-1}.
    \end{equation*}
    The degree of $n!t^{2^n-n-1}$ coincides with the codimension of $\kk[P]$ and
    its coefficient $n!$ is the degree of the $\ZZ$-graded Hibi ring $\kk[P]$.
\end{example}

\section{When do Hibi ideals become Cartwright-Sturmfels?}\label{section:When do Hibi ideals become Cartwright-Sturmfels}
First, we review definitions and known results about Cartwright-Sturmfels ideals.
Let $n$ and $m_1, \ldots, m_n$ be positive integers.
Let $S = \kk[x_{i,j} : 1 \leq j \leq n, 1 \leq i \leq m_j]$ be a standard $\ZZ^n$-graded
polynomial ring with $\deg(x_{i,j}) = \eb_j$.

Let $G = \GL_{m_1}(\kk) \times \cdots \times \GL_{m_n}(\kk)$.
We assume that $G$ acts on $S$.
Precisely, let $g = (g^{(1)}, \ldots, g^{(n)}) \in G$ acts on variables in $S$ by
\begin{equation*}
  g \cdot x_{i,j} = \sum_{k=1}^{m_j}g_{k,i}^{(j)} x_{k,j}.
\end{equation*}
Let $B = \borel_{m_1}(\kk) \times \cdots \times \borel_{m_n}(\kk)$ be the \textit{Borel subgroup}
of $G$, where each $\borel_{m_j}(\kk)$ consists of upper triangular matrices in $\GL_{m_j}(\kk)$.
We say that a homogeneous ideal $J \subset S$ is \textit{Borel-fixed} if $g \cdot J = J$
for all $g \in B$.

\begin{definition}[{\cite[Definition 2.4]{ConcaDeNegriGorlaUGBandCS}}]\label{definition:Cartwright-Sturmfels ideal}
  A homogeneous ideal  $I$ is \textit{Cartwright-Sturmfels} if there exists a squarefree
  multigraded Borel-fixed monomial ideal $J$ such that $\HS(S/I; \tb) = \HS(S/J; \tb)$.
\end{definition}

By Definition~\ref{definition:Cartwright-Sturmfels ideal}, a homogeneous ideal $I$ is
a Cartwright-Sturmfels ideal if and only if $\ini_{\preceq}(I)$ is a Cartwright-Sturmfels
ideal for a monomial order because $\HS(S/I; \tb) = \HS(S/\ini_{\preceq}(I); \tb)$.
It was shown that, if $\kk$ is an infinite field, Cartwright-Sturmfels ideals are characterized
with multigraded generic initial ideals.
As in the $\ZZ$-graded case, for a homogeneous ideal $I$ and a monomial order $\preceq$,
there exists a Zariski open dense set $U \subset G$ such that the initial ideal $\ini_{\preceq}(g \cdot I)$
stays the same for all $g \in U$. We denote such an initial ideal by $\gin_{\preceq}(I)$
and call it the \textit{multigraded generic initial ideal} of $I$ with respect to $\preceq$.

\begin{proposition}[{\cite[Proposition 2.6]{ConcaDeNegriGorlaUGBandCS}}]\label{proposition:generic initial ideal of Cartwright-Sturmfels ideal}
  Let $I \subset S$ be a Cartwright-Sturmfels ideal and let $J \subset S$ be a squarefree
  Borel-fixed monomial ideal such that $\HS(S/I; \tb) = \HS(S/J; \tb)$.
  Then, $\gin_{\preceq}(I) = J$ for all monomial order $\preceq$.
  In particular, $I$ is radical and is generated by polynomials whose degree
  in each component is either $0$ or $1$.
\end{proposition}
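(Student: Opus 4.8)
The plan is to reduce the proposition to a single uniqueness principle about squarefree multigraded Borel-fixed ideals, and then to read off the two ``in particular'' assertions (radicality and the bound on component-degrees) from the resulting identity $\gin_\preceq(I)=J$.

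First I would record the two standard inputs from the theory of multigraded generic initial ideals. By the multigraded analogue of Galligo's theorem, for every monomial order $\preceq$ the ideal $\gin_\preceq(I)$ is a multigraded Borel-fixed monomial ideal; and since passing to an initial ideal and applying a coordinate change in $G$ each preserve the multigraded Hilbert series, we have $\HS(S/\gin_\preceq(I);\tb)=\HS(S/I;\tb)=\HS(S/J;\tb)$. Thus $\gin_\preceq(I)$ and $J$ are two multigraded Borel-fixed ideals with the same multigraded Hilbert series, and $J$ is moreover squarefree.

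The heart of the matter is the following uniqueness principle, which I would isolate as a lemma: a squarefree multigraded Borel-fixed monomial ideal is the unique multigraded Borel-fixed monomial ideal with its multigraded Hilbert series. Granting this, $\gin_\preceq(I)=J$ is immediate, and since $\preceq$ is arbitrary this yields the order-independence. Proving the uniqueness principle is the main obstacle, and I would attack it multidegree by multidegree: in each multidegree $\ab$ the Borel-fixed hypothesis forces the graded piece to be a $B$-stable (hence monomial) subspace of $S_{\ab}$, and I would use squarefreeness together with the Borel moves inside each block $j$ to pin these subspaces down by an inductive/extremal argument, the key point being that squarefreeness removes the ambiguity among Borel-stable subspaces of equal dimension that is present in the merely graded setting. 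In the present framework this is exactly the content of the structural results of~\cite{ConcaDeNegriGorlaUGBandCS}, which I would invoke.

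Finally I would extract the two consequences from $\gin_\preceq(I)=J$. For the degree bound I first observe that every minimal generator of $J$ has degree at most $1$ in each component: if a minimal generator were divisible by $x_{i,j}x_{i',j}$ with $i<i'$, a Borel move lowering $i'$ to $i$ would place $x_{i,j}^2 w$ in $J$, and since a squarefree (hence radical) monomial ideal then also contains $x_{i,j}w$, the original generator would fail to be minimal. Choosing a generic $g\in G$ with $\ini_\preceq(g\cdot I)=J$, a reduced \groebner basis of $g\cdot I$ consists of multigraded-homogeneous elements whose leading monomials are the minimal generators of $J$; each such element therefore has all component-degrees equal to $0$ or $1$, and applying the grading-preserving $g^{-1}$ shows that $I$ is generated by polynomials with the same property. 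For radicality, the same generic $g$ makes $\ini_\preceq(g\cdot I)=J$ squarefree, so $g\cdot I$ is radical by the standard fact that an ideal with a squarefree initial ideal is radical; as $g$ induces an isomorphism of $S$, the ideal $I$ is radical as well.
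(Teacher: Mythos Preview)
The paper does not give its own proof of this proposition: it is quoted from \cite[Proposition 2.6]{ConcaDeNegriGorlaUGBandCS} and used as a black box, so there is no argument in the present paper to compare your attempt against.

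For what it is worth, your outline follows the standard route taken in the cited reference. You correctly reduce everything to the uniqueness principle that a squarefree multigraded Borel-fixed monomial ideal is the only Borel-fixed ideal with its multigraded Hilbert series, and you correctly flag this as the substantive step; your sketch of it is vague, but you ultimately (and honestly) defer to the structural results of Conca--De Negri--Gorla, which is exactly what the paper itself does by citing the proposition. Your extraction of the two consequences is fine: the argument that minimal generators of $J$ have component-degree at most one via a Borel move plus squarefreeness is correct (the omitted case $i=i'$ is excluded directly by squarefreeness), and transporting this and radicality back to $I$ via a generic $g$ is the standard maneuver.
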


As a corollary of Proposition~\ref{proposition:generic initial ideal of Cartwright-Sturmfels ideal},
if $I$ is a Cartwright-Sturmfels ideal, then $\ini_{\preceq}(I)$ is a squarefree monomial ideal generated by monomials
whose degree in each component is either $0$ or $1$.

Cartwright-Sturmfels ideals are closed under elimination of variables.

\begin{theorem}[{\cite[Theorem 2.16 (6)]{ConcaDeNegriGorlaUGBandCS}}]\label{theorem:eliminated ideal of Cartwright-Sturmfels ideal}
  Let $V_i \subset S_{\eb_i}$ be vector subspace for $i = 1, \ldots, n$
  and let $R = \kk[V_1, \ldots, V_n]$ be the $\ZZ^n$-graded subring of $S$.
  If $I$ is a Cartwright-Sturmfels ideal of $S$, then $I \cap R$ is also a
  Cartwright-Sturmfels ideal of $R$.
\end{theorem}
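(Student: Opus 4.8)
The plan is to verify the Cartwright--Sturmfels property of $I \cap R$ through the generic-initial-ideal criterion recorded after Proposition~\ref{proposition:generic initial ideal of Cartwright-Sturmfels ideal}: since $\kk$ is infinite, a homogeneous ideal is Cartwright--Sturmfels if and only if one (equivalently every) multigraded generic initial ideal of it is squarefree. Thus it suffices to exhibit a monomial order $\preceq$ on $R$ for which $\gin_\preceq(I \cap R)$ is squarefree, and the natural route is to realize $I \cap R$ as an elimination ideal inside $S$ and to transport squarefreeness of the initial ideals of $I$ across the elimination.

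First I would reduce to the case where $R$ is a coordinate subring. The Cartwright--Sturmfels property depends only on the multigraded Hilbert series (Definition~\ref{definition:Cartwright-Sturmfels ideal}), and every $g \in G$ acts on $S$ as a multigraded automorphism, so $\HS(S/(g \cdot I); \tb) = \HS(S/I; \tb)$ and $g \cdot I$ is Cartwright--Sturmfels whenever $I$ is. Choosing $g \in G$ with $g \cdot V_i = \langle x_{1,i}, \dots, x_{k_i,i}\rangle$, where $k_i = \dim_\kk V_i$, the map $g$ restricts to a multigraded isomorphism carrying $R$ onto the coordinate subring $R' = \kk[x_{j,i} : 1 \le j \le k_i]$ and $I \cap R$ onto $(g \cdot I) \cap R'$. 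Hence it is harmless to assume from the start that $V_i = \langle x_{1,i}, \dots, x_{k_i,i}\rangle$, that $R$ is generated by a subset of the variables, and that $E := \{x_{j,i} : j > k_i\}$ is the set of variables to be eliminated.

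Next I would fix a monomial order $\preceq$ on $S$ that is an elimination order for $E$ (every monomial involving a variable of $E$ exceeds every $E$-free monomial) and that restricts to a monomial order on $R$. Standard elimination theory then gives $\ini_\preceq(J) \cap R = \ini_\preceq(J \cap R)$ for every homogeneous ideal $J$. I would apply this to $J = h \cdot I$ for a generic element $h$ of the group $G_R := \GL_{k_1}(\kk) \times \cdots \times \GL_{k_n}(\kk)$ that governs generic initial ideals in $R$, embedded into $G$ by acting as the identity on $E$. Since such $h$ preserves $R$ setwise, one checks $(h \cdot I) \cap R = h \cdot (I \cap R)$, so for generic $h$ the left-hand side of the elimination identity computes $\gin_\preceq(I \cap R)$. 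On the right-hand side, $h \cdot I$ is again Cartwright--Sturmfels by the $G$-invariance noted above, so by the corollary stated after Proposition~\ref{proposition:generic initial ideal of Cartwright-Sturmfels ideal} the ideal $\ini_\preceq(h \cdot I)$ is squarefree; contracting a squarefree monomial ideal to a coordinate subring keeps it squarefree, since the contraction is generated by those minimal generators of $\ini_\preceq(h \cdot I)$ that already lie in $R$. Combining, $\gin_\preceq(I \cap R) = \ini_\preceq(h \cdot I) \cap R$ is squarefree, and the criterion yields that $I \cap R$ is Cartwright--Sturmfels.

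The step I expect to require the most care is the interface between the two group actions: the generic initial ideal of $I$ is taken over the full group $G = \prod_i \GL_{m_i}(\kk)$, whereas that of $I \cap R$ lives in $R$ and is taken only over the proper subgroup $G_R = \prod_i \GL_{k_i}(\kk)$. A priori a generic $h \in G_R$ need not be generic in $G$, so one cannot simply invoke $\gin_\preceq(I)$. What makes the argument go through is that the Cartwright--Sturmfels property is invariant under \emph{all} of $G$, not merely at generic points: this forces $\ini_\preceq(h \cdot I)$ to be squarefree for every $h \in G$, in particular for the $G_R$-generic $h$ used to compute $\gin_\preceq(I \cap R)$. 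I would also double-check the precise form of the elimination identity and that $\preceq$ can be chosen so as to restrict to a genuine monomial order on $R$, but these points are routine.
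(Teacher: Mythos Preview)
The paper does not contain its own proof of this statement: Theorem~\ref{theorem:eliminated ideal of Cartwright-Sturmfels ideal} is quoted verbatim from \cite[Theorem~2.16~(6)]{ConcaDeNegriGorlaUGBandCS} and is used as a black box in the proof of Theorem~\ref{theorem:characterization of CS Hibi ideal}. So there is nothing to compare your proposal against here.

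That said, your outline is a reasonable reconstruction of how such a statement is proved. The reduction to a coordinate subring via the $G$-action, the elimination identity $\ini_\preceq(J)\cap R=\ini_\preceq(J\cap R)$ for an elimination order, and the observation that $h\cdot(I\cap R)=(h\cdot I)\cap R$ when $h\in G_R$ fixes $R$ setwise are all correct. The point you flag as delicate---that a $G_R$-generic $h$ need not be $G$-generic---is handled exactly as you say, by invoking the fact (recorded in the paper as the corollary to Proposition~\ref{proposition:generic initial ideal of Cartwright-Sturmfels ideal}) that \emph{every} initial ideal of a Cartwright--Sturmfels ideal is squarefree, not only the generic one. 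One small remark: the gin characterization you use at the end requires $\kk$ to be infinite, which is implicit throughout Section~\ref{section:When do Hibi ideals become Cartwright-Sturmfels} but worth stating explicitly in a standalone proof.
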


Theorem~\ref{theorem:2-minors is Cartwright-Sturmfels ideal} was proved by
Cartwright-Sturmfels in~\cite{CSHilbertScheme}. Inspired by their work,
Conca-De Negri-Gorla introduced the term ``Cartwright-Sturmfels ideal'' and studied it.
Theorem~\ref{theorem:2-minors is Cartwright-Sturmfels ideal} plays an important role in the proof of
our main theorem about the relationship between Hibi ideals and Cartwright-Sturmfels ideals.

\begin{theorem}[{\cite{CSHilbertScheme}}]\label{theorem:2-minors is Cartwright-Sturmfels ideal}
  Let $S = \kk[x_{i,j} : 1 \leq j \leq n, 1 \leq i \leq m]$ be a standard $\ZZ^n$-graded
  polynomial ring with $\deg(x_{i,j}) = \eb_j$. Let $X = (x_{i, j})$ be a matrix of variables
  and let $I_2(X)$ be a homogeneous ideal of $S$ generated by all $2$-minors of $X$.
  Then, $I_2(X)$ is a Cartwright-Sturmfels ideal.
\end{theorem}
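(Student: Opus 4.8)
The plan is to exhibit, for the generic matrix $X = (x_{i,j})$, a squarefree multigraded Borel-fixed monomial ideal whose multigraded Hilbert series matches that of $I_2(X)$, thereby verifying Definition~\ref{definition:Cartwright-Sturmfels ideal} directly. The natural candidate is the generic initial ideal $\gin_{\preceq}(I_2(X))$ with respect to a suitable monomial order, so the strategy reduces to computing this initial ideal and checking that it is squarefree (which, by Proposition~\ref{proposition:generic initial ideal of Cartwright-Sturmfels ideal} and the surrounding discussion, is exactly the characterization of the Cartwright-Sturmfels property over an infinite field).

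First I would recall the classical fact that the $2$-minors of a generic matrix form a Gr\"{o}bner basis with respect to a diagonal term order: the initial ideal $\ini_{\preceq}(I_2(X))$ is generated by the \emph{diagonal} monomials $x_{i,j}x_{i',j'}$ with $i<i'$ and $j<j'$. This is a squarefree monomial ideal, and each generator has degree $\eb_j+\eb_{j'}$ with $j\ne j'$, so its degree in each $\ZZ^n$-component is $0$ or $1$; this already shows that the combinatorial shape of $I_2(X)$ is compatible with the Cartwright-Sturmfels conclusion. The key point is that this squarefree initial ideal has the same multigraded Hilbert series as $I_2(X)$, since passing to an initial ideal preserves $\HS(S/{-};\tb)$.

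The main work is then to show that a squarefree \emph{Borel-fixed} monomial ideal with this Hilbert series actually exists. Here I would either invoke a direct combinatorial model — the squarefree multigraded Borel-fixed ideal can be taken to be generated in each bidegree by the appropriate ``staircase'' monomials obtained from the generic initial ideal — or argue via the group action: since $G=\GL_m(\kk)^n$ acts on $S$ and $I_2(X)$ is stable under this action (the $2$-minors of $X$ transform among themselves under row and column operations within each graded component), the multigraded generic initial ideal $\gin_{\preceq}(I_2(X))$ is automatically Borel-fixed, and one checks it is squarefree by identifying it explicitly. Concretely, one shows $\gin_{\preceq}(I_2(X))$ is generated by the monomials $x_{i,j}x_{i',j'}$ with $j<j'$ (a squarefree, multigraded Borel-fixed family), and that this ideal shares the Hilbert series of $I_2(X)$.

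The hard part will be verifying the squarefreeness of the generic initial ideal while simultaneously controlling its multigraded Hilbert series; the subtlety is that the ordinary ($\ZZ$-graded) generic initial ideal of $I_2(X)$ need not be squarefree, so one must genuinely use the finer $\ZZ^n$-multigrading and the product-of-Borel structure of $B=\borel_m(\kk)^n$. The cleanest route is to exploit the $G$-stability of $I_2(X)$ to conclude Borel-fixedness of the multigraded gin for free, and then to pin down its generators by a dimension count in each multidegree $\ab\in\intnonneg^n$, comparing $\HF(S/I_2(X);\ab)$ with the Hilbert function of the candidate squarefree ideal. Once these two Hilbert functions agree in every multidegree, Definition~\ref{definition:Cartwright-Sturmfels ideal} is satisfied and the theorem follows.
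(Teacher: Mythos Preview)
The paper does not prove this theorem; it is quoted from \cite{CSHilbertScheme} and used as a black box in the proof of Theorem~\ref{theorem:characterization of CS Hibi ideal}. So there is no proof in the paper to compare against, and your proposal is really an attempt to reconstruct the original Cartwright--Sturmfels argument.

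Your overall strategy---produce a squarefree Borel-fixed monomial ideal with the same multigraded Hilbert series as $I_2(X)$, ideally by identifying the multigraded $\gin$---is the right one, but the execution has a genuine gap. The claim that $I_2(X)$ is stable under $G = \GL_m(\kk)^n$ is false. This group does \emph{not} act by row and column operations on $X$; the $j$-th factor acts by an independent linear substitution among the variables of column $j$ only, so the columns of $X$ are sent to $A_1 v_1,\ldots,A_n v_n$ for unrelated $A_j\in\GL_m(\kk)$, and a rank-one matrix need not remain rank one. Concretely, for $m=n=2$ take $g^{(1)}=I$ and $g^{(2)}=\bigl(\begin{smallmatrix}1&1\\0&1\end{smallmatrix}\bigr)$; then $g\cdot(x_{1,1}x_{2,2}-x_{1,2}x_{2,1})=x_{1,1}x_{1,2}+(x_{1,1}x_{2,2}-x_{1,2}x_{2,1})\notin I_2(X)$. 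Thus $\gin_\preceq(I_2(X))\neq\ini_\preceq(I_2(X))$ in general, and your shortcut ``$G$-stability $\Rightarrow$ gin $=$ in $=$ diagonal initial ideal'' collapses. (Indeed the diagonal initial ideal is visibly \emph{not} Borel-fixed: $x_{1,1}x_{2,2}$ lies in it but $x_{1,1}x_{1,2}$ does not.)

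Your explicit candidate for the gin is also wrong: the ideal generated by all $x_{i,j}x_{i',j'}$ with $j<j'$ contributes $m^2$ monomials in each bidegree $\eb_j+\eb_{j'}$, whereas $I_2(X)$ has only $\binom{m}{2}$ linearly independent elements there, so the Hilbert series cannot match. Already for $m=n=2$ the correct multigraded gin is the principal ideal $(x_{1,1}x_{1,2})$, not the four-generator ideal you describe. What is actually required is the combinatorial work done in \cite{CSHilbertScheme}: identify, multidegree by multidegree, the Borel-fixed staircase of monomials whose count equals $\dim_\kk(I_2(X))_{\ab}$, and check that the resulting ideal is radical. That computation is the substance of the theorem and is not supplied by your outline.
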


Now, we characterize Hibi ideals that are Cartwright-Sturmfels ideals.
For simplicity, we consider the case that $S_{L(P)}$ is multigraded by a chain of $P$.

\begin{theorem}\label{theorem:characterization of CS Hibi ideal}
  Let $P$ be a poset and $L(P)$ be the distributive lattice associated with $P$.
  Let $C = \{c_1 <_P \ldots <_P c_{\ell}\}$ be a chain of $P$.
  We assume that $S_{L(P)}$ is multigraded by $C$.
  Then, $I_{L(P)}$ is a Cartwright-Sturmfels ideal if and only if $P \setminus C$
  is a chain of $P$.
\end{theorem}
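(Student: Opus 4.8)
The plan is to prove the two implications separately, reducing each to results already available in the excerpt. Throughout, fix a compatible monomial order $\preceq$ on $S_{L(P)}$, so that (by the equivalence recalled in the Introduction) $\calF_{L(P)}$ is a \groebner basis and $\ini_\preceq(I_{L(P)})=(x_\alpha x_\beta : \alpha,\beta\in L(P)\text{ incomparable})$, whose minimal generators are exactly these squarefree quadratic monomials.

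For necessity, suppose $I_{L(P)}$ is Cartwright-Sturmfels. By the corollary of Proposition~\ref{proposition:generic initial ideal of Cartwright-Sturmfels ideal} recorded in the text, $\ini_\preceq(I_{L(P)})$ is squarefree and each of its minimal generators has degree $0$ or $1$ in every component. Since $\deg_C(x_\alpha x_\beta)=\eb_{|\alpha\cap C|}+\eb_{|\beta\cap C|}$, this forces $|\alpha\cap C|\neq|\beta\cap C|$ for every incomparable pair $\alpha,\beta$. I would then prove the contrapositive: if $P\setminus C$ is not a chain, choose incomparable $a,b\in P\setminus C$ and set $D_a:=\{x\in P:x\leq_P a\}$, $D_b:=\{x\in P:x\leq_P b\}$. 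As $C$ is a chain and $D_a,D_b$ are poset ideals, $D_a\cap C=C_p$ and $D_b\cap C=C_q$ are initial segments; assume $p\leq q$. Putting $\alpha:=D_a\cup({\downarrow}C_q)$, where ${\downarrow}C_q$ is the down-closure of $\{c_1,\dots,c_q\}$ in $P$, and $\beta:=D_b$, one checks $\alpha\cap C=\beta\cap C=C_q$, while $a\in\alpha\setminus\beta$ and $b\in\beta\setminus\alpha$ show $\alpha,\beta$ incomparable. This yields an incomparable pair with equal $C$-intersection, contradicting the condition above, so $P\setminus C$ must be a chain.

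For sufficiency, suppose $P\setminus C=\{d_1<_P\cdots<_P d_k\}$ is a chain, and realize $I_{L(P)}$ by elimination from a determinantal ideal. Let $G:=L(C\sqcup C')$ be the distributive lattice of the disjoint union of the chains $C$ and $C'=P\setminus C$; concretely $G$ is the grid $[0,\ell]\times[0,k]$, with $(i,j)$ corresponding to $\{c_1,\dots,c_i\}\cup\{d_1,\dots,d_j\}$. The Hibi ideal $I_G$ is generated by the relations $x_{(i,j)}x_{(i',j')}-x_{(i,j')}x_{(i',j)}$, i.e.\ by the $2$-minors of the matrix $X=(x_{(i,j)})$ whose columns are indexed by the $C$-coordinate $i\in[0,\ell]$; grading by columns, $\deg(x_{(i,j)})=\eb_i$, matches the setting of Theorem~\ref{theorem:2-minors is Cartwright-Sturmfels ideal}, so $I_G=I_2(X)$ is Cartwright-Sturmfels over $\ZZ^{\ell+1}$. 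The cross-relations of $P$ cut out $L(P)$ as a sublattice of $G$ with the same meet and join (intersection and union of poset ideals), so the present variables span coordinate subspaces $V_i:=\Span(x_\alpha:\alpha\in L(P),\ |\alpha\cap C|=i)\subseteq (S_G)_{\eb_i}$ and $R:=\kk[V_0,\dots,V_\ell]=S_{L(P)}$ with its grading $\deg_C$. I would then identify $I_{L(P)}=I_G\cap R$: choosing a monomial order on $S_G$ that is compatible and simultaneously eliminates the variables $x_\gamma$ with $\gamma\in G\setminus L(P)$, the surviving elements of the \groebner basis $\calF_G$ are exactly those $f_{\alpha,\beta}$ with $\alpha,\beta,\alpha\wedge\beta,\alpha\vee\beta$ all in $L(P)$, which, as $L(P)$ is a sublattice, are precisely the generators $\calF_{L(P)}$. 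Applying Theorem~\ref{theorem:eliminated ideal of Cartwright-Sturmfels ideal} to $I_G$ and $R$ then gives that $I_{L(P)}=I_G\cap R$ is Cartwright-Sturmfels.

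The main obstacle is the identity $I_{L(P)}=I_G\cap R$, and in particular the verification that a single order on $S_G$ can be chosen that is both compatible (so that $\calF_G$ is a \groebner basis) and an elimination order for $G\setminus L(P)$, so that the eliminated \groebner basis restricts exactly to $\calF_{L(P)}$. Confirming that $L(P)$ embeds as a meet- and join-preserving sublattice of the grid $G$ precisely when $P\setminus C$ is a chain is the structural crux; once it is in place, the remainder follows cleanly from the cited determinantal and elimination theorems.
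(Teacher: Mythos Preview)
Your plan coincides with the paper's: for necessity, a contrapositive producing an incomparable pair in $L(P)$ of equal $C$-degree; for sufficiency, embedding $L(P)$ into the grid $G$ (the lattice of ideals of the disjoint union of the two chains) and invoking the determinantal and elimination theorems. Your necessity argument is correct and in fact tidier than the paper's, since taking the down-closure ${\downarrow}C_q$ guarantees a genuine poset ideal, whereas the paper's bare $\alpha\cup C_j$ need not be one unless read that way.

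For sufficiency there is a genuine gap, exactly the one you flag. You need a monomial order on $S_G$ that is simultaneously compatible (so that $\calF_G$ is a \groebner basis) and an elimination order for the variables indexed by $G\setminus L(P)$, and you do not produce one; its existence is not automatic, since $L(P)$ is not a down-set of $G$. The paper sidesteps this by applying the elimination theorem to the \emph{initial} ideals rather than to the Hibi ideals themselves. With a compatible order $\preceq$, both $\ini_\preceq(I_G)$ and $\ini_\preceq(I_{L(P)})$ are the squarefree monomial ideals generated by incomparable pairs, and since $L(P)$ sits in $G$ with the induced inclusion order the equality $\ini_\preceq(I_{L(P)})=\ini_\preceq(I_G)\cap S_{L(P)}$ is immediate at the level of generators. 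Theorem~\ref{theorem:eliminated ideal of Cartwright-Sturmfels ideal} then shows $\ini_\preceq(I_{L(P)})$ is Cartwright-Sturmfels, and since the property depends only on the Hilbert series, so is $I_{L(P)}$. If you prefer to keep your formulation, the cleanest way to establish $I_{L(P)}=I_G\cap S_{L(P)}$ is not via a compatible elimination order but via the toric presentation $x_\alpha\mapsto s\prod_{p\in\alpha}t_p$: the defining map for $G$ restricts on $S_{L(P)}$ to the defining map for $L(P)$, so the kernels intersect as claimed.
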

\begin{proof}
  First, we assume $P \setminus C$ is not a chain of $P$.
  Let $a$ and $b$ are incomparable elements of $P \setminus C$.
  Let $\alpha$ and $\beta$ are poset ideals of $P$ defined to be
  \begin{equation*}
    \alpha = \{p \in P : p \leq_P a\}, \text{ and } \beta = \{p \in P : p \leq_P b\},
  \end{equation*}
  respectively.
  We assume $\alpha \cap C = C_i$ and $\beta \cap C = C_j$.
  Furthermore, we can assume $i \leq j$ without loss of generality.
  Let $\alpha' = \alpha \cup C_j$.
  Then, $\alpha'$ and $\beta$ are poset ideals, in other words, one has $\alpha', \beta \in L(P)$.
  Since $a$ and $b$ are incomparable and $c_j \leq b$, one has $a \in \alpha'$ and
  $b \notin \alpha'$. Similarly, since $a$ and $b$ are incomparable,
  one has $b \in \beta$ and $a \notin \beta$. Therefore, neither $\alpha'$ nor
  $\beta$ includes the other, i.e., $\alpha'$ and $\beta$ are incomparable in $L(P)$.
  Since $\calF_{L(P)}$ is a \groebner basis of $I_{L(P)}$ with respect to a compatible
  monomial order $\preceq$, the monomial $x_{\alpha'}x_{\beta}$ is a monomial in
  the minimal generators of $\ini_{\preceq}(I_{L(P)})$.
  However, by the definition of $\alpha'$ and $\beta$, one has
  $\deg(x_{\alpha'}x_{\beta}) = 2 \eb_j$.
  By the corollary of Proposition~\ref{proposition:generic initial ideal of Cartwright-Sturmfels ideal},
  the ideal $I_{L(P)}$ is not a Cartwright-Sturmfels ideal.

  Next, we assume $P \setminus C$ is a chain of $P$.
  Let $D = P \setminus C = \{d_1 <_P \ldots <_P d_r\}$.
  We consider the case where $d_i$ and $c_j$ are incomparable for all $i = 1, \ldots, r$ and $j = 1, \ldots, l$.
  Then, the poset $P$ and the distributive lattice $L(P)$ are depicted as in
  Figure~\ref{fig:poset of 2-minors} and Figure~\ref{fig:distributive lattice of 2-minors}, respectively.
  The poset ideals of $P$ can be indexed by $i = 0, 1, \ldots, r$ and $j = 0, 1, \ldots, l$, where, we set
  \begin{equation*}
      \alpha_{i,j} = D_i \cup C_j.
  \end{equation*}
  Under this setting, $\deg(x_{\alpha_{i,j}}) = \eb_j$.
  If $\alpha_{i_1, j_1}$ and $\alpha_{i_2, j_2}$ are incomparable,
  one has $i_1 < i_2, j_2 < j_1$ or $i_2 < i_1, j_1 < j_2$.
  We assume $i_1 < i_2, j_2 < j_1$ without loss of generality.
  Then, one has
  \begin{equation*}
    \alpha_{i_1, j_1} \cap \alpha_{i_2, j_2} = \alpha_{i_1, j_2} \text{ and }
    \alpha_{i_1, j_1} \cup \alpha_{i_2, j_2} = \alpha_{i_2, j_1}.
  \end{equation*}
  Therefore,
  \begin{equation*}
    \calF_{L(P)} = \{
      f_{\alpha_{i_1, j_1}, \alpha_{i_2, j_2}}
       = x_{\alpha_{i_1, j_1}}x_{\alpha_{i_2, j_2}} - x_{\alpha_{i_1, j_2}}x_{\alpha_{i_2, j_1}}
       : 0 \leq i_1 < i_2 \leq l, 0 \leq j_2 < j_1 \leq r
       \}
  \end{equation*}
  and $I_{L(P)}$ coincides with the ideal generated by the $2$-minors of the column-graded
  $(r+1)\times(l+1)$-matrix $X = (x_{\alpha_{i,j}})$.
  By Theorem~\ref{theorem:2-minors is Cartwright-Sturmfels ideal},
  the ideal $I_{L(P)}$ is a Cartwright-Sturmfels ideal.
  \begin{figure}[ht]
    \begin{minipage}{0.48\columnwidth}
      \centering
      {\scalebox{0.45}{
        \begin{tikzpicture}[line width=0.05cm]
          \coordinate (c1) at (2,-1); 
          \coordinate (c2) at (2,1);
          \coordinate (c3) at (2,7);
          \coordinate (c4) at (2,9);
          \coordinate (d1) at (0,0);
          \coordinate (d2) at (0,2);
          \coordinate (d3) at (0,6);
          \coordinate (d4) at (0,8);

          \draw  (c1)--(c2);
          \draw  (c2)--(2,2);
          \draw[dotted] (2,2.5)--(2,5.5);
          \draw  (2,6)--(c3);
          \draw  (c3)--(c4);
          \draw  (d1)--(d2);
          \draw  (d2)--(0,3);
          \draw[dotted] (0,3.5)--(0,4.5);
          \draw  (0,5)--(d3);
          \draw  (d3)--(d4);
          
          \draw [line width=0.05cm, fill=white] (c1) circle [radius=0.15];
          \draw [line width=0.05cm, fill=white] (c2) circle [radius=0.15];
          \draw [line width=0.05cm, fill=white] (c3) circle [radius=0.15];
          \draw [line width=0.05cm, fill=white] (c4) circle [radius=0.15];
          \draw [line width=0.05cm, fill=white] (d1) circle [radius=0.15];
          \draw [line width=0.05cm, fill=white] (d2) circle [radius=0.15];
          \draw [line width=0.05cm, fill=white] (d3) circle [radius=0.15];
          \draw [line width=0.05cm, fill=white] (d4) circle [radius=0.15];

          \draw [line width=0.02cm, decorate,decoration={brace,amplitude=15pt,mirror}](2.25,-1) -- (2.25,9) node[midway,xshift=0.8cm] {\huge $l$}; 
          \draw [line width=0.02cm, decorate,decoration={brace,amplitude=15pt}](-0.25,0) -- (-0.25, 8) node[black,midway,xshift=-0.8cm] {\huge $r$};
          \node at (0,-1) [font=\huge] {$D$};
          \node at (2,-2) [font=\huge] {$C$};
        \end{tikzpicture}
        }}
        \caption{$P$}
        \label{fig:poset of 2-minors}
      \end{minipage}
      \begin{minipage}{0.48\columnwidth}
        \centering
        {\scalebox{0.45}{
          \begin{tikzpicture}[line width=0.05cm]
            \coordinate (p00) at (0,0);
            \coordinate (p01) at (-1,1);
            \coordinate (p02) at (-3,3);
            \coordinate (p03) at (-4,4);
            \coordinate (p10) at (1,1);
            \coordinate (p11) at (0,2);
            \coordinate (p12) at (-2,4);
            \coordinate (p13) at (-3,5);
            \coordinate (p20) at (5,5);
            \coordinate (p21) at (4,6);
            \coordinate (p22) at (2,8);
            \coordinate (p23) at (1,9);

            \draw (p00)--(p01);
            \draw (p01)--(-1.5,1.5);
            \draw[dotted] (-1.75,1.75)--(-2.25,2.25);
            \draw (-2.5,2.5)--(p02);
            \draw (p02)--(p03);
            \draw (p10)--(p11);
            \draw (p11)--(-0.5,2.5);
            \draw[dotted] (-0.75,2.75)--(-1.25,3.25);
            \draw (-1.5,3.5)--(p12);
            \draw (p12)--(p13);
            \draw (p20)--(p21);
            \draw (p21)--(3.5,6.5);
            \draw[dotted] (3.25,6.75)--(2.75,7.25);
            \draw (2.5,7.5)--(p22);
            \draw (p22)--(p23);

            \draw (p00)--(p10);
            \draw (p01)--(p11);
            \draw (p02)--(p12);
            \draw (p03)--(p13);
            \draw (p10)--(1.5,1.5);
            \draw (p11)--(0.5,2.5);
            \draw (p12)--(-1.5,4.5);
            \draw (p13)--(-2.5,5.5);
            \draw[dotted] (1.75,1.75)--(4.25,4.25);
            \draw[dotted] (0.75,2.75)--(3.25,5.25);
            \draw[dotted] (-1.25,4.75)--(1.25,7.25);
            \draw[dotted] (-2.25,5.75)--(0.25,8.25);
            \draw (4.5,4.5)--(p20);
            \draw (3.5,5.5)--(p21);
            \draw (1.5,7.5)--(p22);
            \draw (0.5,8.5)--(p23);
  
            \draw [line width=0.05cm, fill=white] (p00) circle [radius=0.15];
            \draw [line width=0.05cm, fill=white] (p01) circle [radius=0.15];
            \draw [line width=0.05cm, fill=white] (p02) circle [radius=0.15];
            \draw [line width=0.05cm, fill=white] (p03) circle [radius=0.15];
            \draw [line width=0.05cm, fill=white] (p10) circle [radius=0.15];
            \draw [line width=0.05cm, fill=white] (p11) circle [radius=0.15];
            \draw [line width=0.05cm, fill=white] (p12) circle [radius=0.15];
            \draw [line width=0.05cm, fill=white] (p13) circle [radius=0.15];
            \draw [line width=0.05cm, fill=white] (p20) circle [radius=0.15];
            \draw [line width=0.05cm, fill=white] (p21) circle [radius=0.15];
            \draw [line width=0.05cm, fill=white] (p22) circle [radius=0.15];
            \draw [line width=0.05cm, fill=white] (p23) circle [radius=0.15];

            \draw [line width=0.02cm, decorate,decoration={brace,amplitude=15pt}](-4.25,4.25) -- (0.75,9.25) node[midway,xshift=-0.8cm,yshift=0.8cm] {\huge $l+1$}; 
            \draw [line width=0.02cm, decorate,decoration={brace,amplitude=15pt,mirror}](5.25,5.25) -- (1.25, 9.25) node[black,midway,xshift=0.8cm,yshift=0.8cm] {\huge $r+1$};
          \end{tikzpicture}
          }}
          \caption{$L(P)$}
          \label{fig:distributive lattice of 2-minors}
      \end{minipage}
  \end{figure}
  
  Finally, we consider the remaining case where $c_i$ and $d_j$ are comparable
  for some $i$ and $j$. Then, no additional poset ideal of
  $P$ appears compared to the previous case and some poset ideals appearing
  in the previous case vanish. In other words, let $Q$ be the poset described in the
  previous case. Then, we can check easily that $L(P)$ is a sublattice of $L(Q)$.
  Let $J \subset S_{L(Q)}$ be the initial ideal of $I_{L(Q)}$ with respect
  to a compatible monomial order, i.e.,
  \begin{equation*}
    J = (x_{\alpha}x_{\beta} : \alpha, \beta \in L(Q) \text{ are incomparable.}).
  \end{equation*}
  Similarly, the initial ideal of $I_{L(P)}$ with respect to a compatible monomial order
  $\preceq$ is
  \begin{equation*}
    \ini_{\preceq}(I_{L(P)}) = (x_{\alpha}x_{\beta} : \alpha, \beta \in L(P) \text{ are incomparable.}).
  \end{equation*}
  Since $L(P)$ is a sublattice of $L(Q)$, one has $\ini_{\preceq}(I_{L(P)}) = J \cap S_{L(P)}$.
  Thus, $\ini_{\preceq}(I_{L(P)})$ is a elimination ideal of the ideal of
  the Cartwright-Sturmfels ideal $J$.
  By Theorem~\ref{theorem:eliminated ideal of Cartwright-Sturmfels ideal},
  the initial ideal $\ini_{\preceq}(I_{L(P)})$ is a Cartwright-Sturmfels ideal, and
  $I_{L(P)}$ is also a Cartwright-Sturmfels ideal.
\end{proof}

\begin{example}\label{example:CS ideal of N-poset}
  We revisit the poset $P$
  and multigradings described in Example~\ref{example:multigradings of N-poset}.

  If $S_{L(P)}$ is multigraded by $C = \{2,3\}$, then $I_{L(P)}$ is not a
  Cartwright-Sturmfels ideal because $P \setminus C = \{1,4\}$ is not a chain.
  In particular, $x_{\{1,2\}}x_{\{2,4\}}$ is a monomial in the minimal generators of
  the initial ideal $\ini_{\preceq}(I_{L(P)})$ with respect to a
  compatible monomial order $\preceq$, and $\deg(x_{\{1,2\}}x_{\{2,4\}}) = 2\eb_1$.

  If $S_{L(P)}$ is multigraded by $C = \{2,4\}$, then $I_{L(P)}$ is a
  Cartwright-Sturmfels ideal because $P \setminus C = \{1,3\}$ is a chain.
  In particular, the initial ideal $\ini_{\preceq}(I_{L(P)})$ with respect to a compatible
  monomial order $\preceq$ coincides with a elimination ideal of the initial ideal
  of the ideal generated by the $2$-minors of the column-graded $3 \times 3$-matrix of variables
  with respect to a diagonal monomial order (Figure~\ref{fig:elimination ideal as a example of Cartwright-Sturmfels ideal}).
  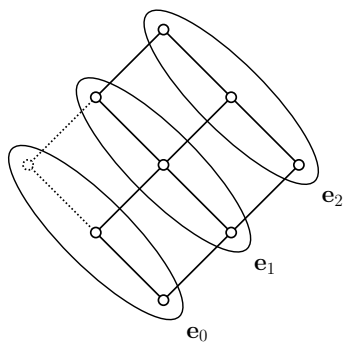
\begin{figure}
    {\scalebox{0.45}{
    \begin{tikzpicture}[line width=0.05cm]
      \coordinate (a1) at (0, 0);
      \coordinate (a2) at (-2, 2);
      \coordinate (a3) at (2, 2);
      \coordinate (a4) at (-4, 4);
      \coordinate (a5) at (0, 4);
      \coordinate (a6) at (4, 4);
      \coordinate (a7) at (-2, 6);
      \coordinate (a8) at (2, 6);
      \coordinate (a9) at (0, 8);
      
      \draw (a1) -- (a2);
      \draw (a1) -- (a3);
      \draw[dotted] (a2) -- (a4);
      \draw (a2) -- (a5);
      \draw (a3) -- (a5);
      \draw (a3) -- (a6);
      \draw[dotted] (a4) -- (a7);
      \draw (a5) -- (a7);
      \draw (a5) -- (a8);
      \draw (a6) -- (a8);
      \draw (a7) -- (a9);
      \draw (a8) -- (a9);
  
      \draw [line width=0.05cm, fill=white] (a1) circle [radius=0.15];
      \draw [line width=0.05cm, fill=white] (a2) circle [radius=0.15];
      \draw [line width=0.05cm, fill=white] (a3) circle [radius=0.15];
      \draw [dotted, line width=0.05cm, fill=white] (a4) circle [radius=0.15];
      \draw [line width=0.05cm, fill=white] (a5) circle [radius=0.15];
      \draw [line width=0.05cm, fill=white] (a6) circle [radius=0.15];
      \draw [line width=0.05cm, fill=white] (a7) circle [radius=0.15];
      \draw [line width=0.05cm, fill=white] (a8) circle [radius=0.15];
      \draw [line width=0.05cm, fill=white] (a9) circle [radius=0.15];

      \draw [very thick, rotate around={45:(-1,1)}] (a2) ellipse (1 and 3.5);
      \draw [very thick, rotate around={45:(0,5)}] (a5) ellipse (1 and 3.5);
      \draw [very thick, rotate around={45:(2,6)}] (a8) ellipse (1 and 3.5);

      \node at (1,-1) [font=\Huge] {$\eb_0$};
      \node at (3,1) [font=\Huge] {$\eb_1$};
      \node at (5,3) [font=\Huge] {$\eb_2$};
  \end{tikzpicture}
    }}
    \caption{Eliminating one variable}
    \label{fig:elimination ideal as a example of Cartwright-Sturmfels ideal}
  \end{figure}
\end{example}

\bibliography{references}
\bibliographystyle{amsplain}
\end{document}